\DeclareMathOperator\dist{dist}
\newtheorem{theorem}{Theorem}[section]
\newtheorem{lemma}{Lemma}[section]
\newtheorem{example}{Example}[section]
\newtheorem{remark}{Remark}[section]
\numberwithin{equation}{section}
\begin{document}
\title[]
{Asymptotics for Hessenberg matrices for the Bergman shift operator on Jordan regions}

\date{\today}

\author[E.\ Saff]{Edward B.\ Saff}
\address{Center for Constructive Approximation\\
         Department of Mathematics\\
         Vanderbilt University\\
         1326 Stevenson Center\\
         37240 Nashville\\
         USA}
\email{edward.b.saff@Vanderbilt.Edu}
\urladdr{http://www.math.vanderbilt.edu/\textasciitilde esaff/}
\author[N. Stylianopoulos]{Nikos Stylianopoulos}
\address{Department of Mathematics and Statistics,
         University of Cyprus, P.O. Box 20537, 1678 Nicosia, Cyprus}
\email{nikos@ucy.ac.cy}
\urladdr{http://ucy.ac.cy/~nikos}

\keywords{Bergman orthogonal polynomials, Faber polynomials, Bergman shift operator, Toeplitz matrix,
strong asymptotics, conformal mapping}
\subjclass[2000]{30C10, 30C30, 30C50, 30C62, 41A10}
\thanks{The research of the first author was supported, in part, by U.S.\ National Science Foundation
grants DMS-0808093 and DMS-1109266}

\begin{abstract}
Let $G$ be a bounded Jordan domain in the complex plane. The Bergman polynomials $\{p_n\}_{n=0}^\infty$ of $G$
are the orthonormal polynomials with respect to the area measure over $G$. They are
uniquely defined by the entries of an infinite upper Hessenberg matrix $M$. This matrix represents the
 Bergman shift operator of $G$. The main purpose of the paper is to describe and analyze a close relation
between $M$ and the Toeplitz matrix with symbol the normalized conformal map of the exterior of the unit
circle onto the complement of $\overline{G}$. Our results are based on the strong asymptotics of $p_n$.
As an application, we describe and analyze an algorithm for recovering the shape of $G$ from its area moments.
\end{abstract}

\maketitle
\allowdisplaybreaks
\section{Introduction}\label{section:intro}

Let $G$ be a bounded simply-connected domain in the complex plane $\mathbb{C}$,
whose boundary $\Gamma:=\partial G$ is a Jordan curve and let
$\{p_n\}_{n=0}^{\infty}$ denote the sequence of  Bergman polynomials of
$G$. This is the unique sequence of polynomials
\begin{equation}\label{eq:pndef}
p_n(z) = \lambda_n z^n+ \cdots, \quad \lambda_n>0,\quad n=0,1,2,\ldots,
\end{equation}
that are orthonormal with respect to the inner product
$$
\langle f,g\rangle := \int_G f(z) \overline{g(z)} dA(z),
$$
where $dA$ stands for the area  measure.  We denote by $L_a^2(G)$ the Hilbert space of all functions $f$
analytic in $G$ for which
$$
\|f\|_{L^2(G)}:=\langle f,f\rangle^{1/2}<\infty,
$$
and recall (cf.\ \cite{Gabook87}) that the polynomials $\{p_n\}_{n=0}^\infty$ form a complete orthonormal
system for $L_a^2(G)$.

Let $\Omega:=\overline{\mathbb{C}}\setminus\overline{G}$ denote the complement of $\overline{G}$ in $\overline{\mathbb{C}}$ and let $\Phi$
denote the conformal map
$\Omega\to\Delta:=\{w:|w|>1\}$, normalized so that near infinity
\begin{equation}\label{eq:Phi}
\Phi(z)=\gamma z+\gamma_0+\frac{\gamma_1}{z}+\frac{\gamma_2}{z^2}+\cdots,\quad \gamma>0.
\end{equation}
Finally, let $\Psi:=\Phi^{-1}:\Delta\to\Omega$ denote the inverse conformal map. Then,
\begin{equation}\label{eq:Psi}
\Psi(w)=bw+b_0+\frac{b_1}{w}+\frac{b_2}{w^2}+\cdots, \quad |w|> 1,
\end{equation}
with
\begin{equation}\label{eq:b-def}
b=1/\gamma=\textup{cap}(\Gamma),
\end{equation}
where $\textup{cap}(\Gamma)$ denotes the \textit{(logarithmic) capacity} of $\Gamma$.

On $L_a^2(G)$ we consider the multiplication by $z$ operator (also known as the \textit{Bergman shift operator})
$\mathcal{M}:f\to zf$. Note that $\mathcal{M}$ defines a bounded, noncompact,  linear operator on $L_a^2(G)$
and that
\begin{equation}\label{eq:essspM}
\sigma_{ess}(\mathcal{M})=\Gamma;
\end{equation}
see \cite{AxCoMcDo}, where we use $\sigma_{ess}(L)$ to denote the  \textit{essential spectrum} of a bounded
linear operator $L;$ that is, the set of all $\lambda\in\mathbb{C}$ for which $L-\lambda I$ is not a Fredholm operator. For
the operators we consider, the essential spectrum is the same as the continuous spectrum.

We also consider the matrix representation of $\mathcal{M}$ in terms of the orthonormal basis
$\{p_n\}_{n=0}^\infty$. This induces the upper Hessenberg matrix

\begin{equation}\label{eq:Mmat}
{M}=\left[%
\begin{array}{cccccc}
   b_{00}& b_{01}& b_{02}& b_{03}& b_{04}&\cdots\\
   b_{10}& b_{11}& b_{12}& b_{13}& b_{14}&\cdots \\
    0    & b_{21}& b_{22}& b_{23}& b_{24}&\cdots \\
    0    &  0    & b_{32}& b_{33}& b_{34}&\cdots \\
    0    &  0    &   0   & b_{43}& b_{44}&\cdots \\
   \vdots& \vdots& \vdots& \ddots& \ddots&\ddots
\end{array}%
\right],
\end{equation}
where
\begin{equation}\label{eq:bij}
b_{k,j}=\langle zp_j,p_k\rangle,\quad k\ge 0,\ j\ge 0.
\end{equation}
Note that $b_{k,j}=0$ for $k\ge j+2$ and that
\begin{equation}\label{eq:zp_n}
zp_n(z)=\sum_{k=0}^{n+1}b_{k,n}p_k(z).
\end{equation}
In particular,
\begin{equation}\label{eq:bn+1n}
b_{n+1,n}=\frac{\lambda_n}{\lambda_{n+1}}>0,\quad n=0,1,\ldots.
\end{equation}
It follows
\begin{equation}\label{eq:zpn=}
 b_{n+1,n}p_{n+1}(z)=zp_n(z)-\sum_{k=0}^nb_{k,n}p_k(z)
\end{equation}
and, hence, the entries of $M$ define uniquely the sequence of Bergman polynomials of $G$,
in the sense that $p_{n+1}$, $n=0,1,\ldots$, can be computed recursively from (\ref{eq:zpn=}).

It is shown in \cite{PuSt} and \cite{KhSt} (see also \cite[Thm 2.4]{St-CR10}) that except for some trivial cases,
 the matrix (\ref{eq:Mmat})
is not banded; i.e.,  the $p_n$'s do not satisfy a recurrence relation of bounded length. It is also well-known
that the eigenvalues
of the $n\times n$ principal submatrix of $M$ coincide with the zeros of $p_n(z)$.

Our goal is to investigate the asymptotic behavior of the entries in the matrix $M$. In particular, we show that
if the boundary of $G$ is piecewise analytic without cusps, then all the diagonals (sub, super and main) have limits which are
the coefficients of the Laurent expansion (\ref{eq:Psi}) of the inverse conformal map $\Psi$:
\begin{equation}\label{eq:limit-subdia}
\lim_{n\to\infty}b_{n+1,n}=b \quad\text{and}\quad\lim_{n\to\infty}b_{n-k,n}=b_k,\quad k=0,1,\ldots.
\end{equation}

A potential application of (\ref{eq:limit-subdia}) is in the area of geometric tomography, where the following
inverse problem arises: Given a finite number of complex moments
\begin{equation}\label{eq:area-moments}
\mu_{kj}:=\langle z^k,z^j\rangle=\int_G
z^k\overline{z}^j\,dA(z),\quad k,j=0,1,\ldots,
\end{equation}
how can one approximate the region $G$ that generated these moments?
Regarding existence and uniqueness, we note a result of Davis and Pollak \cite{DaPo} stating that the infinite matrix
$[\mu_{m,k}]_{m,k=0}^\infty$ defines uniquely the curve $\Gamma$.
By utilizing the given moments to compute Bergman polynomials, and thereby a principal submatrix of $M$,
the subdiagonals of the submatrix will provide an approximation to the Laurent coefficients of the mapping
of the unit circumference onto the boundary of $G$. We will discuss this procedure
in Section~\ref{sec:recovery}.

We note that there is a one-to-one correspondence between the complex moments (\ref{eq:area-moments}) and the real moments
\begin{equation}\label{eq:real-moments}
\tau_{mn}:=\int_G
x^my^n\,dxdy,\quad m,n=0,1,\ldots.
\end{equation}
Namely,
\begin{equation}\label{eq:tau-to-mu}
\mu_{m,n}=\sum_{j=0}^m\sum_{k=0}^n i^{m-j}i^{n-k}\binom{m}{j}\binom{n}{k}\tau_{j+k,m+n-j-k},\quad i:=\sqrt{-1},
\end{equation}
or, in the inverse direction,
\begin{equation}\label{eq:mu-to-tau}
\tau_{m,n}=(-i)^n2^{-m-n}\sum_{j=0}^m\sum_{k=0}^n \binom{m}{j}\binom{n}{k}\mu_{j+k,m+n-j-k};
\end{equation}
see \cite{DaPo}. Thus, the moments in (\ref{eq:area-moments}) will uniquely determine the moments in (\ref{eq:real-moments})
and vice-versa.


The Faber polynomials $\{F_n\}_{n=0}^\infty$ of $G$ are defined as the polynomial part of the expansion of
$\Phi^n(z)$, $n=0,1,\ldots$, near infinity, that is,
\begin{equation}\label{eq:PhinFn}
\Phi^n(z)=F_n(z)-E_n(z),\quad z\in\Omega,
\end{equation}
where
\begin{equation}\label{eq:FnEndef}
F_n(z)=\gamma^{n}z^n+\cdots\quad \text{and}\quad E_n(z)=O\left(\frac{1}{z}\right),\quad z\to\infty.
\end{equation}
The Faber polynomial of the 2nd kind, $G_n(z)$, is defined as the polynomial part of $\Phi^n(z)\Phi^\prime(z)$,
that is,
\begin{equation}\label{eq:PhinPhipGn}
G_n(z)=\Phi^n(z)\Phi^\prime(z)-H_n(z),\quad z\in\Omega,
\end{equation}
where
\begin{equation}\label{eq:Gndef}
G_n(z)=\gamma^{n+1}z^n+\cdots\quad\text{and}\quad H_n(z)=O\left(\frac{1}{z^2}\right),\quad z\to\infty.
\end{equation}
It follows immediately from (\ref{eq:PhinFn}) and (\ref{eq:PhinPhipGn}) that
\begin{equation}\label{eq:GnFn+1}
G_n(z)=\frac{F_{n+1}^\prime(z)}{n+1}\quad\textup{and}\quad H_n(z)=\frac{E_{n+1}^\prime(z)}{n+1}.
\end{equation}
It is well-known that the Faber polynomials of the 2nd kind satisfy the following recurrence relation
(see \cite[p.\ 52]{EVetna}):
\begin{equation}\label{eq:Gn-rr}
zG_n(z)=bG_{n+1}(z)+\sum_{j=0}^nb_jG_{n-j}(z),\quad G_0(z)\equiv b.
\end{equation}

Consider now the Toeplitz (and upper Hessenberg) matrix $T_\Psi$ defined by the continuous function $\Psi(w)$ on
$\mathbb{T}:=\{w:|w|=1\}$, that is,
\begin{equation}\label{eq:Tmat}
{T_\Psi}:=\left[%
\begin{array}{cccccc}
   b_{0}& b_{1}& b_{2}& b_{3}& b_{4}&\cdots\\
   b& b_{0}& b_{1}& b_{2}& b_{3}&\cdots \\
    0    & b& b_{0}& b_{1}& b_{2}&\cdots \\
    0    &  0    & b& b_{0}& b_{1}&\cdots \\
    0    &  0    &   0   & b& b_{0}&\cdots \\
   \vdots& \vdots& \vdots& \ddots& \ddots&\ddots
\end{array}%
\right].
\end{equation}
It follows from (\ref{eq:Gn-rr}) that the eigenvalues of the $n\times n$ principal submatrix of $T_\Psi$
coincide with the zeros of $G_n(z)$; see also \cite{Ul72}. This is a relation similar to the one connecting
the upper Hessenberg matrix $M$ with the Bergman polynomials $\{p_n\}_{n=0}^\infty$.

In \cite[\S7.8]{St-arXiv1202} it is shown that if $\Gamma$ is piecewise analytic without cusps, then
\begin{equation}\label{eq:bn-NS-Estimate}
|b_n|\le c_1(\Gamma)\frac{1}{n^{1+\omega}},\quad n\in\mathbb{N},
\end{equation}
where $\omega\pi$ ($0<\omega<2$) is the smallest exterior angle of $\Gamma$. (Hereafter, we use $c_k(\Gamma)$, $k=1,2,\ldots$, to denote a non-negative constant that depends only on $\Gamma$.)
Therefore, in this case, the symbol
$\Psi$ of the Toeplitz matrix $T_\Psi$ belongs to the Wiener algebra, which leads to the conclusion that
$T_\Psi$ defines a bounded linear operator on the Hilbert space $l^2$ and that
\begin{equation}\label{eq:essspT}
\sigma_{ess}(T_\Psi)=\Gamma;
\end{equation}
see e.g.\ \cite[p.~1--10]{Bo-Gr05}.

We end this section by noting a result, regarding a property of $H_n$, that we are going to use in
Section~\ref{sec:proofs}. A proof can be found in \cite[Lem. 2.1]{St-arXiv1202}.

\begin{lemma}\label{lem:Hn}
For any $n\in\mathbb{N}$, $H_n$ is analytic and square integrable in $\Omega$.
\end{lemma}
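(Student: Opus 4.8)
The plan is to handle the two assertions separately, and for square integrability to split $\Omega$ into a bounded collar around $\Gamma$ and a neighborhood of $\infty$. For analyticity, I would argue directly from the defining identity (\ref{eq:PhinPhipGn}), namely $H_n=\Phi^n\Phi'-G_n$. Since $\Phi:\Omega\to\Delta$ is conformal, both $\Phi$ and $\Phi'$ are analytic throughout $\Omega$, so $\Phi^n\Phi'$ is analytic on $\Omega\setminus\{\infty\}$; as $G_n$ is a polynomial (recall $G_n=F_{n+1}'/(n+1)$ by (\ref{eq:GnFn+1})), it follows that $H_n$ is analytic on $\Omega\setminus\{\infty\}$. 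The normalization in (\ref{eq:Gndef}) forces the leading terms $\gamma^{n+1}z^n$ of $\Phi^n\Phi'$ and of $G_n$ to cancel, leaving $H_n(z)=O(1/z^2)$ as $z\to\infty$; hence $\infty$ is a removable singularity with $H_n(\infty)=0$, and $H_n$ is analytic on all of $\Omega$. (Equivalently, one may invoke the identity $H_n=E_{n+1}'/(n+1)$ from (\ref{eq:GnFn+1}) together with the analyticity of $E_{n+1}=F_{n+1}-\Phi^{n+1}$ in $\Omega$.)

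For square integrability, fix $R$ so large that $\overline{G}\subset\{|z|<R\}$ and write $\Omega=\Omega_1\cup\Omega_2$ with $\Omega_1:=\Omega\cap\{|z|<R\}$ and $\Omega_2:=\Omega\cap\{|z|>R\}$. On $\Omega_2$ the estimate $H_n(z)=O(1/z^2)$ yields $|H_n(z)|^2=O(|z|^{-4})$, and since $\int_{|z|>R}|z|^{-4}\,dA(z)<\infty$ we obtain $\int_{\Omega_2}|H_n|^2\,dA<\infty$. On the bounded region $\Omega_1$ I would again use $H_n=\Phi^n\Phi'-G_n$. The polynomial $G_n$ is bounded on $\Omega_1$, hence square integrable there. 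For the remaining term, observe that $\Phi$ maps $\Omega_1$ conformally, and in particular injectively, onto a bounded subdomain of $\Delta$ — bounded because $\Phi$ sends the neighborhood of $\infty$ removed from $\Omega$ onto a neighborhood of $\infty$ in $\Delta$ — so $|\Phi|\le\rho$ on $\Omega_1$ for some finite $\rho$, and the change-of-variables formula for a univalent holomorphic map gives
\[
\int_{\Omega_1}|\Phi'(z)|^2\,dA(z)=\mathrm{Area}\big(\Phi(\Omega_1)\big)<\infty.
\]
Consequently $\int_{\Omega_1}|\Phi^n\Phi'|^2\,dA\le\rho^{2n}\int_{\Omega_1}|\Phi'|^2\,dA<\infty$, and combining the two regions shows that $H_n$ is square integrable over $\Omega$.

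The delicate point is precisely the square integrability of $\Phi'$ on the collar $\Omega_1$: if $\Gamma$ has corners or cusps, then $\Phi'$ need not be bounded near $\Gamma$, so no pointwise estimate is available. The device that circumvents this is to read $\int_{\Omega_1}|\Phi'|^2\,dA$ as the finite area of the image $\Phi(\Omega_1)$ via the conformal change of variables, with univalence of $\Phi$ guaranteeing that this area is counted exactly once and is finite. This observation is the crux of the argument; everything else is either the defining normalization of $H_n$ or the elementary integrability computation at $\infty$.
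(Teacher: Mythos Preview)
Your argument is correct. The paper itself does not supply a proof of this lemma; it merely refers the reader to \cite[Lem.~2.1]{St-arXiv1202}, so there is no in-paper argument to compare against, and your self-contained proof is a legitimate substitute.

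The only step worth spelling out slightly more is the boundedness of $\Phi(\Omega_1)$: since $\{|z|=R\}\subset\Omega$ and $\Phi$ is a homeomorphism of $\Omega$ onto $\Delta$, the image $\Phi(\{|z|=R\})$ is a Jordan curve in $\Delta$, and the unbounded complementary component of this curve must be $\Phi(\Omega_2)$ (it contains $\Phi(\infty)=\infty$), which forces $\Phi(\Omega_1)$ into the bounded component. With that in hand, your area identity $\int_{\Omega_1}|\Phi'|^2\,dA=\mathrm{Area}\big(\Phi(\Omega_1)\big)<\infty$ is precisely the right device to handle the possible blow-up of $\Phi'$ near a non-smooth $\Gamma$, and the rest of the decomposition is routine.
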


\section{Main results}\label{sec:Main}
In this section we state and discuss our main results. Their proofs are given in Section~\ref{sec:proofs}.
Section~\ref{sec:recovery} contains applications of our results to the recovery of planar regions.

From (\ref{eq:essspM}) and (\ref{eq:essspT}) it follows that
\begin{equation}\label{eq:esssp=}
\sigma_{ess}(\mathcal{M})=\sigma_{ess}(T_\Psi).
\end{equation}
The next theorem shows that the connection between the matrices $M$ and $T_\Psi$ is much more substantial.
\begin{theorem}\label{th:main-pw-analytic}
Assume that $\Gamma$ is piecewise analytic without cusps.
Then,  it holds as $n\to\infty$,
\begin{equation}\label{th:main-pw-analytic-1}
\sqrt{\frac{n+2}{n+1}}b_{n+1,n}=b+O\left(\frac{1}{{n}}\right),
\end{equation}
and for $k\ge 0$,
\begin{equation}\label{th:main-pw-analytic-2}
\sqrt{\frac{n-k+1}{n+1}}b_{n-k,n}=b_k+O\left(\frac{1}{\sqrt{n}}\right),
\end{equation}
where $O$ depends on $k$ but not on $n$. (See \text{(\ref{eq:0const})} for more precise estimates.)
\end{theorem}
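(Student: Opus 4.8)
The plan is to connect the Hessenberg entries $b_{k,n}=\langle zp_n,p_k\rangle$ to the Laurent coefficients $b_k$ of $\Psi$ by exploiting the known strong asymptotics of the Bergman polynomials $p_n$. Recall that for a piecewise analytic boundary without cusps one has $p_n(z)=\sqrt{(n+1)/\pi}\,\Phi^n(z)\Phi'(z)(1+o(1))$ exterior to $\Gamma$, together with the matching relation $\lambda_n=\sqrt{(n+1)/\pi}\,\gamma^{n+1}(1+o(1))$ between leading coefficients. From $\gamma=1/b$ and this last relation, the subdiagonal entry $b_{n+1,n}=\lambda_n/\lambda_{n+1}$ immediately yields $\sqrt{(n+2)/(n+1)}\,b_{n+1,n}=b(1+o(1))$; the task is to upgrade this $o(1)$ to the sharp $O(1/n)$ claimed in (\ref{th:main-pw-analytic-1}), which will require the rate in the asymptotic for $\lambda_n$ rather than just the leading-order statement.

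For the superdiagonals, the idea is to compute $b_{n-k,n}=\langle zp_n,p_{n-k}\rangle$ by an integration over $G$ and then convert it, via the strong asymptotics and Green's/Stokes' theorem, into a contour integral over $\Gamma$ that picks out a single Fourier coefficient of $\Psi$. Concretely I would substitute the leading approximations $p_n\approx\sqrt{(n+1)/\pi}\,G_n$ (where $G_n=\Phi^n\Phi'$ is the Faber polynomial of the second kind, using the definitions (\ref{eq:PhinPhipGn})--(\ref{eq:GnFn+1})) into the inner product. Since the polynomials are analytic, $\langle zp_n,p_{n-k}\rangle$ reduces by Stokes to a boundary integral, and on $\Gamma$ one parametrizes $z=\Psi(w)$, $w=e^{i\theta}$; the factor $\Phi^n\overline{\Phi^{n-k}}$ becomes $e^{i(k+1)\theta}$ up to phase, so that the integral extracts precisely the coefficient of $e^{-(k+1)i\theta}$ in $z\,\overline{\Phi'}/|\Phi'|^2$ or the equivalent expression, which by the recurrence (\ref{eq:Gn-rr}) equals $b_k$. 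The normalizing prefactor $\sqrt{(n-k+1)/\pi}\cdot\sqrt{(n+1)/\pi}$ matched against $(n+1)/\pi$ produces the displayed factor $\sqrt{(n-k+1)/(n+1)}$.

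The delicate part, and where Lemma~\ref{lem:Hn} enters, is controlling the error between the true $p_n$ and its asymptotic surrogate $\sqrt{(n+1)/\pi}\,G_n$ when it is inserted inside the inner product. The strong asymptotics hold off the boundary, but the inner product integrates over all of $G$ up to $\Gamma$, so one must estimate the $L^2(G)$ contribution of the remainder. Writing $p_n=\sqrt{(n+1)/\pi}(G_n - H_n) + (\text{error})$ and using that $H_n$ is analytic and square integrable in $\Omega$ (Lemma~\ref{lem:Hn}) lets one control the subdominant Faber part; the residual error is then bounded using the global $L^2$ error estimates for Bergman polynomials on piecewise analytic domains (of the type established in \cite{St-arXiv1202}). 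Cauchy--Schwarz converts these $L^2$ bounds into the stated $O(1/\sqrt{n})$ rate for the superdiagonals. I expect this error analysis—quantifying how the boundary behaviour of the $p_n$ degrades the accuracy of the Faber approximation and tracking the explicit dependence on $k$—to be the main obstacle; the difference between the sharp $O(1/n)$ for the subdiagonal and the weaker $O(1/\sqrt n)$ for the superdiagonals reflects exactly how much cancellation survives in each case.
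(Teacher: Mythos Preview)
Your treatment of the subdiagonal via $b_{n+1,n}=\lambda_n/\lambda_{n+1}$ and the rate in~(\ref{eq:finelambdan})--(\ref{eqinthm:finelambdanii}) is exactly what the paper does. For the superdiagonals, however, your plan is essentially the one the paper uses only in the \emph{analytic} case (Section~\ref{sec:proof-analytic}): apply Green's formula to get a contour integral over $\Gamma$, substitute the strong asymptotics $p_n\sim\sqrt{(n+1)/\pi}\,\Phi^n\Phi'$ on $\Gamma$, and read off a Fourier coefficient of $\Psi$. In the piecewise analytic setting that substitution is not available: the error bound~(\ref{eqinthm:finepnii1}) carries the factor $1/(\dist(z,\Gamma)|\Phi'(z)|)$, which blows up at the boundary and at the corners, so the asymptotics for $p_n$ are not uniform on $\Gamma$ and cannot be inserted into a line integral over~$\Gamma$. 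Your proposed decomposition $p_n=\sqrt{(n+1)/\pi}(G_n-H_n)+(\text{error})$ inherits the same problem, since $G_n-H_n=\Phi^n\Phi'$ lives in $\Omega$, not in $G$, and the ``error'' you would need to control in $L^2(G)$ is not what Lemma~\ref{lem:Hn} bounds.

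The paper avoids this by staying inside $L^2(G)$: it writes $\frac{\gamma^{n+1}}{\lambda_n}p_n=G_n-q_{n-1}$ with $q_{n-1}$ a genuine \emph{polynomial}, so that $\frac{\gamma^{n+1}}{\lambda_n}b_{n-k,n}=\langle zG_n,p_{n-k}\rangle-\langle zq_{n-1},p_{n-k}\rangle$. Then the algebraic recurrence~(\ref{eq:Gn-rr}) replaces $zG_n$ by $bG_{n+1}+\sum_j b_jG_{n-j}$, bringing in the $b_k$ directly without any boundary parametrization. What remains is to estimate $\langle G_m,p_l\rangle$ for $m>l$ (Lemma~\ref{lem:pkGm}) and $\langle zq_{n-1},p_l\rangle$ (Lemma~\ref{lem:zqmpl}). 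Green's formula is used, but only in Lemma~\ref{lem:pkGm}, and the key trick there is to write $\overline{\Phi^{m+1}}=1/\Phi^{m+1}$ on $\Gamma$ so that the residue theorem kills the main term in $\Omega$, leaving only the $H_m$ contribution controlled by $\|H_m\|_{L^2(\Omega)}$---this is where Lemma~\ref{lem:Hn} and the piecewise analytic estimates from~\cite{St-arXiv1202} enter. The bound $\|q_{n-1}\|_{L^2(G)}=O(1/n)$, also from~\cite{St-arXiv1202}, handles the second term via Cauchy--Schwarz. Your outline is therefore close in spirit but misplaces the role of the boundary integral and of $H_n$; the missing ingredient is the polynomial remainder $q_{n-1}$ and the use of the recurrence \emph{before} passing to any contour.
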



Improvements in the order of convergence occur in cases when $\Gamma$ is smooth.
In order to state the corresponding results we need to introduce the smoothness class $C(q,\alpha)$ of Jordan curves.
We say that  $\Gamma$ belongs to  $C(q,\alpha)$,
$q\in\mathbb{N}$, if $\Gamma$ is defined by $z=g(s)$, where $s$ denotes arclength, with
$g^{(q)}\in \textup{Lip}\,\alpha$,
for some  $0<\alpha<1$. Then both $\Phi$ and $\Psi:=\Phi^{-1}$ are $q$ times continuously differentiable in
$\overline{\Omega}\setminus\{\infty\}$ and $\overline{\Delta}\setminus\{\infty\}$ respectively, with $\Phi^{(q)}$
and $\Psi^{(q)}$ in $\textup{Lip}\,\alpha$: see, e.g., \cite[p.\ 5]{Su74}.

\begin{theorem}\label{th:main-smooth}
Assume that $\Gamma\in C(p+1,\alpha)$, with $p+\alpha>1/2$.
Then,  it holds as $n\to\infty$,
\begin{equation}\label{th:main-sm-1}
\sqrt{\frac{n+2}{n+1}}b_{n+1,n}=b+O\left(\frac{1}{n^{2(p+\alpha)}}\right),
\end{equation}
and for $k\ge 0$,
\begin{equation}\label{th:main-sm-2}
\sqrt{\frac{n-k+1}{n+1}}b_{n-k,n}=b_k+O\left(\frac{1}{n^{p+\alpha}}\right),
\end{equation}
where $O$ depends on $k$ but not on $n$.
(See \text{(\ref{eq:0const-sm})} for more precise estimates.)
\end{theorem}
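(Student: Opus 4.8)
The plan is to follow the same route as for Theorem~\ref{th:main-pw-analytic}, but to propagate the sharper error terms that the hypothesis $\Gamma\in C(p+1,\alpha)$ supplies. Two inputs improve in this smoothness class. First, since $\Psi^{(p+1)}\in\textup{Lip}\,\alpha$ on $\mathbb{T}$, the Laurent coefficients of $\Psi$ obey $|b_n|\le c_2(\Gamma)\,n^{-(p+\alpha+1)}$, which is the $C(p+1,\alpha)$ analogue of (\ref{eq:bn-NS-Estimate}). Second, the strong asymptotics of \cite{St-arXiv1202} relating the Bergman polynomials to the second-kind Faber polynomials sharpen to
\[
p_n=\sqrt{\tfrac{n+1}{\pi}}\,G_n+R_n,\qquad \|R_n\|_{L^2(G)}\le c_3(\Gamma)\,n^{-(p+\alpha)},
\]
together with the refined leading-coefficient estimate $\lambda_n=\sqrt{(n+1)/\pi}\,\gamma^{\,n+1}\big(1+\varepsilon_n\big)$, where $|\varepsilon_n|\le c_4(\Gamma)\,n^{-2(p+\alpha)}$. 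I would record these at the outset as the only curve-dependent ingredients.

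For the subdiagonal estimate (\ref{th:main-sm-1}) I would bypass inner products entirely and use the exact identity $b_{n+1,n}=\lambda_n/\lambda_{n+1}$ from (\ref{eq:bn+1n}). Substituting the refined asymptotics for $\lambda_n$ and $\lambda_{n+1}$ and recalling $b=1/\gamma$ gives
\[
\sqrt{\tfrac{n+2}{n+1}}\,b_{n+1,n}
=b\,\frac{1+\varepsilon_n}{1+\varepsilon_{n+1}}
=b+O\!\left(n^{-2(p+\alpha)}\right),
\]
which is exactly (\ref{th:main-sm-1}); the doubled exponent is inherited directly from the squared error in the leading coefficients.

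For the remaining diagonals (\ref{th:main-sm-2}) I would expand $b_{n-k,n}=\langle zp_n,p_{n-k}\rangle$, insert $p_n=\sqrt{(n+1)/\pi}\,G_n+R_n$ in both slots, and replace $zG_n$ by its recurrence expansion (\ref{eq:Gn-rr}). The term with matching index, $b_k\langle G_{n-k},G_{n-k}\rangle$, is the one that survives: combined with the value $\langle G_{n-k},G_{n-k}\rangle=\pi/(n-k+1)+\cdots$, it cancels the prefactor $\sqrt{(n+1)(n-k+1)}/\pi$ and leaves $b_k$, which yields (\ref{th:main-sm-2}) in the limit. Here I would use the exact representation of $\langle G_m,G_n\rangle_{L^2(G)}$ obtained by Green's formula: writing an antiderivative of $G_n$ as $F_{n+1}/(n+1)$ via (\ref{eq:GnFn+1}) and passing to the unit circle through $z=\Psi(w)$ reduces the area integral to a contour integral whose principal part is $\tfrac{\pi}{n+1}\delta_{mn}$ and whose remainder is governed by the $L^2(\Omega)$ quantities attached to $H_m$, which are finite by Lemma~\ref{lem:Hn}.

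The main obstacle is to bound, at the sharper rate $n^{-(p+\alpha)}$, the three families of error terms left over: the cross terms $\langle zG_n,R_{n-k}\rangle$ and $\langle zR_n,G_{n-k}\rangle$; the off-diagonal products $\langle G_{n-j},G_{n-k}\rangle$ with $j\neq k$; and the recurrence tail $\sum_{j\neq k}b_j\langle G_{n-j},G_{n-k}\rangle$. The cross terms are handled by the Cauchy--Schwarz inequality together with $\|R_n\|_{L^2(G)}\le c_3(\Gamma)n^{-(p+\alpha)}$ and $\|G_n\|_{L^2(G)}=O(n^{-1/2})$. The off-diagonal products are where the smoothness enters quantitatively: one must show, from the $H$-dependent remainder above, that they are $o(1/n)$ uniformly for bounded $|m-n|$, with the rate controlled by $p+\alpha$. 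The tail is summed using the improved decay $|b_j|\le c_2(\Gamma)j^{-(p+\alpha+1)}$. Assembling these contributions produces the stated $O(n^{-(p+\alpha)})$ in (\ref{th:main-sm-2}), and tracking the constants through each step gives the more precise bound advertised in (\ref{eq:0const-sm}).
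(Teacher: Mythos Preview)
Your proposal is correct, and for (\ref{th:main-sm-1}) it coincides with the paper's argument. For (\ref{th:main-sm-2}) you take a genuinely different decomposition. The paper expands only the first factor: writing $(\gamma^{n+1}/\lambda_n)p_n=G_n-q_{n-1}$ while leaving $p_{n-k}$ intact yields (\ref{eq:expbnk}), in which the inner products are of the form $\langle G_m,p_{n-k}\rangle$. This buys three simplifications that your two-sided expansion forfeits: these inner products vanish exactly for $m<n-k$, so the recurrence sum truncates at $j=k$ and there is no tail to estimate; the diagonal value $\langle G_{n-k},p_{n-k}\rangle=\gamma^{n-k+1}/\lambda_{n-k}$ is exact rather than asymptotic; and the remaining finitely many terms are handled by a single estimate (Lemma~\ref{lem:pkGm-sm}) together with the bound $\|q_{n-1}\|_{L^2(G)}=O(n^{-(p+\alpha+1/2)})$ (Lemma~\ref{lem:zqmpl-sm}). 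Your route, expanding both $p_n$ and $p_{n-k}$, produces the products $\langle G_m,G_l\rangle$. To control the off-diagonal ones at the required rate---Cauchy--Schwarz alone gives only $O(1)$ after the prefactor---you will in effect have to reinsert $G_{n-k}=(\gamma^{n-k+1}/\lambda_{n-k})p_{n-k}+q_{n-k-1}$ and invoke orthogonality plus the $\|q\|$ bound, which is the paper's one-sided step done a second time. The ingredients and the final estimate (\ref{eq:0const-sm}) are the same; the paper's arrangement is simply more economical. One minor point: the paper quotes $|b_n|\le c\,n^{-(p+\alpha+1/2)}$ from \cite[Cor.~1.1]{St-arXiv1202} rather than your $n^{-(p+\alpha+1)}$, but either suffices here.
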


For the case of an analytic boundary $\Gamma$ further improved asymptotic results can be obtained. To state
these results we need to introduce some notation. For an analytic curve $\Gamma$ the mapping
$\Psi$ can be analytically continued as a conformal map to the exterior of some disk $\{w:|w|<\varrho\}$,
where $0<\varrho<1$. We denote by $L_\sigma$ the image of the circle $\{w:|w|=\sigma\}$ under the map $\Psi$.
In other words,
$$
L_\sigma:=\{z\in\mathbb{C}:|\Phi(z)|=\sigma\}.
$$

\begin{theorem}\label{th:main-analytic}
\footnote{This theorem, along with a sketch of its proof given in Section~\ref{sec:proof-analytic}, was
presented by the first author at the Joint Meeting of the AMS and MAA in Phoenix, January 2004.}
Assume that the boundary $\Gamma$ is analytic and let $\varrho<1 $ be the smallest index for which $\Phi$
is conformal in the exterior of $L_\varrho$. Then,  it holds as $n\to\infty$,
\begin{equation}\label{eq:main-analytic1}
\sqrt{\frac{n+2}{n+1}}\,b_{n+1,n}=b+O(\varrho^{2n}),
\end{equation}
and for $k\ge 0$,
\begin{equation}
\sqrt{\frac{n-k+1}{n+1}}\,b_{n-k,n}=b_k+O(\sqrt{n\log n}\varrho^n),
\end{equation}
where $O$ depends on $k$ but not on $n$.
(See \text{(\ref{eq:kn-analytic})} for more precise estimates.)
\end{theorem}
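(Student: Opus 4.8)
The plan is to reduce everything to two ingredients: the classical strong (Carleman) asymptotics of the Bergman polynomials on and outside an analytic $\Gamma$, and the near-orthogonality of the Faber polynomials of the second kind $\{G_n\}$, whose exact Toeplitz recurrence \eqref{eq:Gn-rr} already encodes the coefficients $b,b_0,b_1,\dots$ we are chasing.

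First I would record the inputs. For analytic $\Gamma$ and $\varrho$ as in the statement, Carleman's theorem gives the leading-coefficient asymptotics $\lambda_n=\sqrt{(n+1)/\pi}\,\gamma^{n+1}(1+O(\varrho^{2n}))$. Since $b_{n+1,n}=\lambda_n/\lambda_{n+1}$ by \eqref{eq:bn+1n} and $b=1/\gamma$ by \eqref{eq:b-def}, dividing the two asymptotics cancels the powers of $\gamma$ and yields $b_{n+1,n}=b\sqrt{(n+1)/(n+2)}\,(1+O(\varrho^{2n}))$, which is exactly \eqref{eq:main-analytic1}. The squared rate $\varrho^{2n}$ is inherited verbatim from the extremal (leading-coefficient) estimate, and this is the structural reason the subdiagonal converges faster than the remaining diagonals.

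For a general entry $b_{n-k,n}=\langle zp_n,p_{n-k}\rangle$ I would first locate the main term by a boundary-integral computation and then control the error through $\{G_n\}$. Using Green's theorem with a polynomial antiderivative $Q$ of $p_{n-k}$ (the additive constant is immaterial, since $\oint_\Gamma zp_n\,dz=0$),
\[ b_{n-k,n}=\int_G zp_n\,\overline{p_{n-k}}\,dA=\frac{1}{2i}\oint_\Gamma zp_n(z)\,\overline{Q(z)}\,dz. \]
On $\Gamma$ we have $w=\Phi(z)$ with $|w|=1$, hence $\overline w=1/w$ and $\Phi'(z)\,dz=dw$; substituting the strong asymptotics $p_n\approx\sqrt{(n+1)/\pi}\,w^n\Phi'$ and $Q\approx (\pi(n-k+1))^{-1/2}\,w^{\,n-k+1}$ collapses the integral to
\[ \frac{1}{2i}\,\frac{\sqrt{n+1}}{\pi\sqrt{n-k+1}}\oint_{|w|=1}\Psi(w)\,w^{\,k-1}\,dw=\frac{\sqrt{n+1}}{\pi\sqrt{n-k+1}}\cdot\pi b_k, \]
because by the Laurent expansion \eqref{eq:Psi} the coefficient of $w^{-1}$ in $\Psi(w)w^{k-1}$ is precisely $b_k$. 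This produces the main term $\sqrt{(n+1)/(n-k+1)}\,b_k$, which rearranges to the second assertion. For the rigorous version I would instead write $p_n=\sqrt{(n+1)/\pi}\,G_n+(\text{error})$ and evaluate $\langle zG_n,G_{n-k}\rangle$ from the exact recurrence \eqref{eq:Gn-rr} together with the near-orthogonality $\langle G_m,G_{n-k}\rangle=\tfrac{\pi}{n-k+1}\delta_{m,n-k}+O(\varrho^n)$; only the $j=k$ term of the recurrence survives, again giving $b_k\tfrac{\pi}{n-k+1}$.

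The hard part will be the error analysis producing the sharp rate $O(\sqrt{n\log n}\,\varrho^n)$. Two sources enter: the $L^2(G)$ defect of the approximation $p_n\approx\sqrt{(n+1)/\pi}\,G_n$, and the deviation of $\{G_n\}$ from exact orthogonality. For analytic $\Gamma$ both are governed by the Faber tails $E_n,H_n$ from \eqref{eq:PhinFn} and \eqref{eq:PhinPhipGn}: since $\Phi$ continues conformally past $L_\varrho$, Lemma~\ref{lem:Hn} together with a maximum-principle/Parseval argument gives $E_n,H_n=O(\varrho^n)$ on $\Gamma$ and in $L^2(\Omega)$. Feeding these decay rates into the Cauchy--Schwarz estimates for the contour integral --- or into the finitely many recurrence terms that matter for fixed $k$ --- bounds each error contribution by $\varrho^n$ up to algebraic factors; the residual $\sqrt{n\log n}$ arises from the $\sqrt{n+1}$ normalization combined with the $L^2$ strong-asymptotic bound for $p_n-\sqrt{(n+1)/\pi}\,G_n$, and pinning down exactly this factor, rather than a cruder $n\varrho^n$, is the most delicate point of the proof.
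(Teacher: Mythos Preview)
Your boundary-integral approach---Green's formula with an antiderivative $Q$ of $p_{n-k}$, substitution of the strong asymptotics, and collapse to $\oint_{|w|=1}\Psi(w)w^{k-1}\,dw=2\pi i\,b_k$---is exactly the route the paper takes; the subdiagonal estimate via $b_{n+1,n}=\lambda_n/\lambda_{n+1}$ and Carleman's $\lambda_n$-asymptotics is likewise identical.

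The one point you flag as unresolved, the $\sqrt{\log n}$ factor, has a very concrete source in the paper that you should incorporate. The paper does not merely use the $O(\sqrt{n}\,\varrho^n)$ pointwise strong asymptotic for $p_n$; it invokes Carleman's \emph{coefficient-level} expansion $p_n=(\lambda_n/\gamma^{n+1})\Phi^n\Phi'(1+\omega_n)$ with $\omega_n(\Psi(w))=\sum_{\nu=1}^n \nu A_\nu w^{\nu-1-n}-\sum_{\nu\ge1}\nu a_\nu w^{-\nu-1-n}$ and Gaier's bound $\sum \nu|A_\nu|^2+\sum\nu|a_\nu|^2\varrho^{-2\nu}\le \varrho^{2n+2}/\{(n+1)(1-\varrho^{2n})\}$. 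After integrating $p_{n-k}$ termwise to get $Q_{n-k+1}$, one must estimate $\sum_{\nu=1}^{n-k}|A_\nu|+\sum_{\nu\ge1}|a_\nu|$ on $|w|=1$; writing $|A_\nu|=\sqrt{\nu}|A_\nu|\cdot\nu^{-1/2}$ and applying Cauchy--Schwarz gives $\big(\sum\nu|A_\nu|^2\big)^{1/2}\big(\sum_{\nu\le n-k}\nu^{-1}\big)^{1/2}$, and the harmonic sum is where the $\sqrt{\log n}$ appears. This is the entire ``delicate point''; once you have it, the error $h_n$ drops out with no further work.

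Your alternative route via $p_n\approx\sqrt{(n+1)/\pi}\,G_n$ and the exact recurrence \eqref{eq:Gn-rr} is in fact the strategy the paper uses for the \emph{non-analytic} Theorems~\ref{th:main-pw-analytic} and~\ref{th:main-smooth}, through the auxiliary polynomial $q_{n-1}=G_n-(\gamma^{n+1}/\lambda_n)p_n$ and Lemmas~\ref{lem:pkGm}--\ref{lem:zqmpl}. It would work here too, but the direct contour calculation is cleaner in the analytic case and is what the paper actually does.
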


In the converse direction we have:
\begin{theorem}\label{th:inverse-analytic}
Assume that $\Gamma$ is a Jordan curve without zero interior angles. If
\begin{equation}\label{eq-inverse-analytic}
\limsup_{n\to\infty}\left|\sqrt{\frac{n+2}{n+1}}b_{n+1,n}-b\right|^{1/n}<1,
\end{equation}
then $\Gamma$ is analytic.
\end{theorem}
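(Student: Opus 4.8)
The plan is to translate the hypothesis, which concerns the subdiagonal ratio $b_{n+1,n}=\lambda_n/\lambda_{n+1}$, into an analyticity statement about $\Psi$ by routing through the leading coefficients $\lambda_n$ and the auxiliary functions $H_n$ of Lemma~\ref{lem:Hn}. Write $\lambda_n^2=\frac{n+1}{\pi}\gamma^{2n+2}(1+\varepsilon_n)$, which simply defines the defect $\varepsilon_n$. A direct computation from \eqref{eq:bn+1n} then gives
\[
\sqrt{\tfrac{n+2}{n+1}}\,b_{n+1,n}=b\,\sqrt{\frac{1+\varepsilon_n}{1+\varepsilon_{n+1}}},
\]
so that \eqref{eq-inverse-analytic} is equivalent to $\limsup_n\bigl|(1+\varepsilon_n)/(1+\varepsilon_{n+1})-1\bigr|^{1/n}=:\theta<1$. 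Because $\Gamma$ has no zero interior angle, the known leading-order asymptotic $\lambda_n^2\sim\frac{n+1}{\pi}\gamma^{2n+2}$ applies, i.e.\ $\varepsilon_n\to0$; this is the sole point at which the angle hypothesis is used. First I would note that $\varepsilon_n\to0$ keeps the denominators bounded away from $0$, whence $|\varepsilon_n-\varepsilon_{n+1}|=O\bigl((\theta+\delta)^n\bigr)$ for every $\delta>0$. Telescoping the absolutely convergent series $\varepsilon_n=\sum_{m\ge n}(\varepsilon_m-\varepsilon_{m+1})$ and using $\varepsilon_\infty=0$ then yields the geometric decay $\limsup_n|\varepsilon_n|^{1/n}\le\theta<1$.

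The second step is to convert geometric smallness of the scalar defect $\varepsilon_n$ into geometric smallness of $\|H_n\|_{L^2(\Omega)}$. Here I would invoke the extremal characterization $1/\lambda_n^2=\min\{\|q\|_{L^2(G)}^2:q(z)=z^n+\cdots\}$, together with the identity
\[
\frac{1}{\lambda_n^2}=\frac{\pi}{(n+1)\gamma^{2n+2}}+\frac{\|H_n\|_{L^2(\Omega)}^2}{\gamma^{2n+2}}-\Bigl\|\tfrac{G_n}{\gamma^{n+1}}-\tfrac{p_n}{\lambda_n}\Bigr\|_{L^2(G)}^2,
\]
which combines the relation $G_n=\Phi^n\Phi'-H_n$ on $\Omega$ (via Green's theorem and the change of variables $w=\Phi(z)$, the cross term vanishing by analyticity and Lemma~\ref{lem:Hn}) with the Pythagorean decomposition coming from $p_n\perp\{1,z,\dots,z^{n-1}\}$. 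Rewriting the left-hand side through $\varepsilon_n$ identifies $-\varepsilon_n\,\pi/(n+1)$ with $\|H_n\|_{L^2(\Omega)}^2$ up to the nonnegative correction term. Provided the strong asymptotics of $p_n$ bound that correction by $\|H_n\|_{L^2(\Omega)}^2$ (with at most a fixed loss in the exponential rate), I conclude $\|H_n\|_{L^2(\Omega)}^2=O\bigl((\theta'+\delta)^n\bigr)$ for some $\theta'<1$; that is, $H_n=E_{n+1}'/(n+1)$ is geometrically small in $L^2(\Omega)$.

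The final step is to deduce analyticity of $\Gamma$ from this decay. Since $E_{n+1}=F_{n+1}-\Phi^{n+1}$ is analytic in $\Omega$ and $O(1/z)$ at infinity, I would integrate the $L^2(\Omega)$ bound on $E_{n+1}'$ to control $E_{n+1}$, and thereby the Laurent coefficients attached to $\Psi$, obtaining $\limsup_k|b_k|^{1/k}<1$. By the classical criterion this forces $\Psi$ to continue as a univalent map of some $\{w:|w|>\varrho\}$ with $\varrho<1$, equivalently $\Phi$ to continue conformally across $\Gamma$, so that $\Gamma=\Psi(\mathbb{T})$ is an analytic Jordan curve. The resulting $\varrho$ should reproduce the rate appearing in Theorem~\ref{th:main-analytic}, closing the loop with the direct implication.

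I expect the main obstacle to lie in the second step: one must ensure that the geometric smallness of the scalar defect $\varepsilon_n$ is genuinely inherited by $\|H_n\|_{L^2(\Omega)}$ and is not absorbed into the correction term $\bigl\|G_n/\gamma^{n+1}-p_n/\lambda_n\bigr\|_{L^2(G)}^2$. This calls for a two-sided comparison, extracted from the strong asymptotics of the Bergman polynomials, guaranteeing that this correction is dominated by $\|H_n\|_{L^2(\Omega)}^2$. Securing that domination, with only a controlled loss in the decay rate, is the crux; once it is in hand, the passage from $L^2$-decay to analytic continuation is routine.
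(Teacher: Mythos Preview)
Your first step mirrors the paper's: reparametrize by the defect $\varepsilon_n$ (the paper uses $\alpha_n=\varepsilon_n/(1+\varepsilon_n)$), deduce geometric decay of $|\varepsilon_n-\varepsilon_{n+1}|$ from the hypothesis, and telescope using $\varepsilon_n\to0$ to obtain $\limsup_n|\varepsilon_n|^{1/n}<1$. For the passage from here to $\limsup_n|b_n|^{1/n}<1$, the paper does not route through $\|H_n\|_{L^2(\Omega)}$ at all; it simply invokes Theorem~1.3 of \cite{St-arXiv1202}, which packages that implication directly. Your approach via the identity for $1/\lambda_n^2$ and the correction term $\|G_n/\gamma^{n+1}-p_n/\lambda_n\|_{L^2(G)}^2$ might be made to work, but the obstacle you yourself flag---that the scalar defect could in principle be absorbed by the correction term rather than by $\|H_n\|^2$---is real, and it is exactly what the cited external theorem handles.

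The genuine gap, however, is in your treatment of the angle hypothesis. You claim it is used \emph{solely} to secure $\varepsilon_n\to0$, and then in the final step you assert that $\limsup_k|b_k|^{1/k}<1$ ``forces $\Psi$ to continue as a univalent map'' across $\mathbb{T}$. That implication is false: geometric decay of the Laurent coefficients $b_k$ yields only an \emph{analytic} continuation of $\Psi$ into $\{|w|>\varrho\}$ for some $\varrho<1$; it says nothing about univalence on $\mathbb{T}$. If $\Psi'(w_0)=0$ at some $w_0\in\mathbb{T}$, then (since $\Psi$ is univalent in $\Delta$) locally $\Psi(w)=\Psi(w_0)+a_2(w-w_0)^2+\cdots$ with $a_2\ne0$, and the image $\Gamma=\Psi(\mathbb{T})$ acquires an outward-pointing cusp at $\Psi(w_0)$---a zero interior angle---so $\Gamma$ is not an analytic Jordan curve. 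This is precisely where the paper spends the hypothesis: ``no zero interior angles'' rules out such cusps, hence $\Psi'\ne0$ on $\mathbb{T}$, hence $\Gamma$ is analytic. In your sketch the hypothesis has already been consumed earlier (where the paper does not need it; it treats $\alpha_n\to0$ as a known general fact), and nothing remains to close this last, essential, step.
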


The following example shows that the inverse statement does not make sense for the main diagonal
of $M$.
\begin{example}\label{ex:RotSym}
Consider the case where the domain $G$ has $m$-fold rotational symmetry about the origin, for some $m\ge 2$.
\end{example}
This means that $\textup{e}^{i2\pi/m}z\in\Omega$, whenever $z\in\Omega$. Then, it is easy to see that
\begin{equation}\label{eq:rotsym-b0}
b_0=0\quad\text{and}\quad b_{n,n}=0,\quad n\ge m.
\end{equation}
Indeed, by using symmetry arguments it follows
\begin{equation}\label{eq:rotsym-Psi}
\Psi(\textup{e}^{i2\pi/m}w)=\textup{e}^{i2\pi/m}\Psi(w), \quad w\in\Omega,
\end{equation}
and for $n=km+j$, with $j=0,1,\ldots,m-1$,
\begin{equation}\label{eq:rotsym-pn}
p_n(z)=z^jq_k(z^m),\quad \deg(q_k)=k.
\end{equation}
The first relation in (\ref{eq:rotsym-b0}) follows at once from (\ref{eq:rotsym-Psi}).
For the second relation in (\ref{eq:rotsym-b0}),
observe that (\ref{eq:rotsym-pn}) implies for $n\ge m$ that
$$
p_n(z)=\lambda_nz^n+O(z^{n-m}),
$$
which, in turn,  yields $\langle z^{n+1},p_n\rangle=0$ and therefore  $b_{n,n}=\langle zp_n,p_n\rangle=0$.

\section{A recovery algorithm}\label{sec:recovery}
\noindent {\tt Reconstruction Algorithm}
\begin{enumerate}[1.]
\itemsep=5pt
\item
\textit{Start with a finite set of complex moments $\mu_{kj}$, $k,j=0,1,\ldots,n$; see (\ref{eq:area-moments}),
or, equivalently from a finite set of real moments $\tau_{kj}$, $k,j=0,1,\ldots,n$; see (\ref{eq:real-moments}).}
\item
\textit{Use the Arnoldi version of the Gram-Schmidt (GS) process, in the way indicated in \cite[\S7.4]{St-arXiv1202},
to construct the Bergman polynomials
$\{p_k\}_{k=0}^{n}$ from the moments $\mu_{kj}$, $k,j=0,1,\ldots,n$.
This involves at the $k$-step the orthonormalization of the set
$\{p_0,p_1,\ldots,p_{k-1},zp_{k-1}\}$, rather than the set of monomials $\{1,z,\ldots,z^{k-1},z^k\}$, as in the
conventional GS. This process, in particular, yields the inner products}
\begin{equation*}
b_{k,j}=\langle zp_j,p_k\rangle,\quad j= 0,1,\ldots,n,\ k= 0,\ldots,j+1.
\end{equation*}
\item
\textit{Choose a number $m$, $1<m<n$, and set}
\begin{equation}\label{eq:b-approx}
b^{(n)}:=\sqrt{\frac{n+2}{n+1}}b_{n+1,n}, \quad
b^{(n)}_k:=\sqrt{\frac{n-k+1}{n+1}}b_{n-k,n},\quad k=0,1,\ldots,m.
\end{equation}
\textit{(See Theorem~\ref{thm:recalg} and Remark~\ref{rem:recalg} below, for a suitable choice of $m$.)}
\item
\textit{Form}
\begin{equation}\label{eq:psi-approx}
\Psi^{(n)}_m(w):=b^{(n)}w+b^{(n)}_0+\frac{b^{(n)}_1}{w}+\ldots+\frac{b^{(n)}_m}{w^m}.
\end{equation}
\item
\textit{Approximate $\Gamma$ by $\Gamma^{(n)}_m$, where}
\begin{equation}\label{eq:Gamma-approx}
\Gamma^{(n)}_m:=\Psi^{(n)}_m(w),\quad w\in\mathbb{T}.
\end{equation}\end{enumerate}

\begin{remark}\label{rem:arnoldi}
We refer to \cite[\S7.4]{St-arXiv1202} for a discussion regarding the stability properties of the Arnoldi GS.
In particular, we note that the Arnoldi  GS does not suffer from the severe
ill-conditioning associated with the conventional GS as reported, for instance, by theoretical and numerical
evidence in \cite{PW86}.
\end{remark}

The following result justifies the use of the algorithm for analytic curves.

\begin{theorem}\label{thm:recalg}
Assume that $\Gamma$ is analytic, and let $\varrho<1$ be the smallest index for which $\Phi$
is conformal in the exterior of $L_\varrho$. Set $n=2m$. Then, for any $|w|\ge 1$ it holds that
\begin{equation}
|\Psi(w)-\Psi^{(n)}_m(w)|\le  c_1(\Gamma) \sqrt{m\log m}\,\varrho^m+c_2(\Gamma)|w|\varrho^{4m},
\end{equation}
where the constants $c_1(\Gamma)$ and $c_2(\Gamma)$ depend on $\Gamma$ only.
\end{theorem}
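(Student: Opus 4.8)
The plan is to estimate $\Psi(w)-\Psi^{(n)}_m(w)$ coefficient by coefficient, exploiting that the choice $n=2m$ makes the asymptotic rates of Theorem~\ref{th:main-analytic} carry a built‑in surplus. Writing $\Psi$ as in (\ref{eq:Psi}) and $\Psi^{(n)}_m$ as in (\ref{eq:psi-approx}), I would first record the decomposition
\[
\Psi(w)-\Psi^{(n)}_m(w)=(b-b^{(n)})w+\sum_{k=0}^m(b_k-b^{(n)}_k)w^{-k}+\sum_{k=m+1}^\infty b_kw^{-k},
\]
valid for $|w|\ge 1$, which separates the error into a leading‑coefficient term, a finite block of truncated‑coefficient errors, and the analytic tail. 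Since $|w|\ge 1$ gives $|w^{-k}|\le 1$ for every $k\ge 0$, only the first term carries a factor of $|w|$; this already accounts for the two‑term shape of the asserted bound.

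The two outer terms are the easy ones. For the leading term, (\ref{eq:main-analytic1}) gives $|b-b^{(n)}|=O(\varrho^{2n})$, and with $n=2m$ this is $O(\varrho^{4m})$, so $|(b-b^{(n)})w|\le c_2(\Gamma)|w|\varrho^{4m}$, which is exactly the second term of the claim. For the tail, analyticity of $\Psi$ on $\{|w|>\varrho\}$ together with Cauchy's estimate on a circle $|w|=\sigma$ with $\sigma$ decreasing to $\varrho$ yields $|b_k|\le c_3(\Gamma)\varrho^k$; here I would use that the obstruction to conformality on $|w|=\varrho$ is a critical point or loss of univalence rather than a pole, so $\Psi$ remains bounded up to that circle. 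Summing the resulting geometric series gives $\bigl|\sum_{k>m}b_kw^{-k}\bigr|\le c_3(\Gamma)\varrho^{m+1}/(1-\varrho)=O(\varrho^m)$ for $|w|\ge 1$, which is absorbed into the first term of the claim.

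The heart of the argument is the middle block $\sum_{k=0}^m(b_k-b^{(n)}_k)w^{-k}$. By the second estimate of Theorem~\ref{th:main-analytic} each summand satisfies $|b_k-b^{(n)}_k|=O(\sqrt{n\log n}\,\varrho^n)$ with a constant $C_k$ depending on $k$, so for $|w|\ge 1$ the block is bounded by $\sum_{k=0}^m C_k\sqrt{n\log n}\,\varrho^n$. Setting $n=2m$ turns $\varrho^n$ into $\varrho^{2m}=(\varrho^m)^2$, the square of the target scale $\varrho^m$; the spare factor $\varrho^m$ is then spent on controlling both the number $m+1$ of summands and the $k$‑growth of the constants. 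Up to the harmless ratio $\sqrt{2m\log 2m}/\sqrt{m\log m}\to\sqrt2$, the bound reduces to showing that $\varrho^m\sum_{k=0}^m C_k$ stays bounded. This is the main obstacle, and it is precisely where the sharper form (\ref{eq:kn-analytic}) of Theorem~\ref{th:main-analytic} is needed: one must verify there that $C_k=O(\varrho^{-\theta k})$ for some $\theta<1$ (strictly slower than $\varrho^{-k}$), so that $\varrho^m\sum_{k=0}^m C_k=O(\varrho^{(1-\theta)m})\to 0$. Granting this, the middle block is $O(\sqrt{m\log m}\,\varrho^m)$, and adding the three estimates produces the stated inequality, with $c_1(\Gamma)$ collecting the tail and middle‑block constants and $c_2(\Gamma)$ the leading one.
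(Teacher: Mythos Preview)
Your decomposition into leading term, middle block, and tail is exactly the paper's route, and your treatment of the leading term is correct. Two points need adjustment.

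For the middle block, what (\ref{eq:kn-analytic})--(\ref{eq:hn}) actually give is
\[
|b_k-b_k^{(n)}|\le c\sqrt{n}\,\varrho^{n}+c'\sqrt{(n-k+1)\log(n-k+1)}\,\varrho^{\,n-k},
\]
with constants independent of both $n$ and $k$. In your framing this is $C_k\asymp\varrho^{-k}$, i.e.\ $\theta=1$, not $\theta<1$. Fortunately the borderline case is exactly enough: with $n=2m$,
\[
\sum_{k=0}^m\sqrt{(2m-k+1)\log(2m-k+1)}\,\varrho^{2m-k}
\le c''\sqrt{m\log m}\sum_{j=m}^{2m}\varrho^{j}
=O\bigl(\sqrt{m\log m}\,\varrho^m\bigr),
\]
so $\varrho^m\sum_{k\le m}C_k$ is bounded (not decaying), which is all you said you needed. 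You should drop the requirement $\theta<1$ and just carry out this sum.

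For the tail, your claim that $\Psi$ ``remains bounded up to'' $|w|=\varrho$ is not justified by the definition of $\varrho$, which only guarantees conformality on $|w|>\varrho$; Cauchy on circles $|w|=\sigma>\varrho$ then gives only $|b_k|=O(\sigma^k)$ with a $\sigma$-dependent constant, hence a tail of order $\sigma^m$, which is \emph{not} dominated by $\sqrt{m\log m}\,\varrho^m$. The paper closes this by invoking the sharper estimate $|b_k|\le c(\Gamma)\,\varrho^k/\sqrt{k}$ from \cite[Cor.~1.1]{St-arXiv1202} (a consequence of univalence on $|w|>\varrho$, not mere analyticity), which yields $\sum_{k>m}|b_k|=O(\varrho^m)$ directly.
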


\begin{remark}\label{rem:recalg}
Similar estimates, as in the above theorem, can be obtained for the case where $\Gamma$ is piecewise analytic without cusps.
However, these estimates are too pessimistic compared with actual numerical evidence; see Figure~\ref{fig:square} below.
We were only able to rigorously show
that for an uniform error of order $O(1/\sqrt{m})$ we require the computation of the orthonormal polynomials up to degree
$m^{4+\omega}$, where $\omega\pi$ is the smallest exterior angle of $\Gamma$.
\end{remark}

For applications to the 2D image reconstruction arising from tomographic data we refer to \cite{GPSS}.
Here we highlight the performance of the reconstruction algorithm by applying it to the recovery of three curves,
coming from different classes: an analytic curve, one curve with corners and one curve with cusps.
For providing matter for comparison with the reconstruction algorithm of \cite[\S7.7]{St-arXiv1202} we have
chosen to present results for exactly the same curves as in \cite{St-arXiv1202}.
We note that the reconstruction algorithm of \cite{St-arXiv1202} is based on approximating first the exterior conformal
mapping $w=\Phi(z)$ in terms of the ratio $p_{n+1}(z)/p_n(z)$, cf. the estimates
(\ref{eqinthm:finepn})--(\ref{eqinthm:finepnii1}) below, and then on inverting the so-formed Laurent series in order to compute
an approximation of the inverse map $z=\Psi(w)$.

In each case we start by computing a finite set of complex moments (\ref{eq:area-moments}) up to degree $n$,
and then follow the steps 2--5 of the algorithm, taking $m=n/2$.
In all three examples the complex moments are known explicitly.
All computations  were carried out on a desktop PC, using the computing environment MAPLE.

In Figures~\ref{fig:ellipse}--\ref{fig:cupsed-hypo} we depict the computed approximation $\Gamma^{(n)}_m$
against the original curve $\Gamma$. The presented plots indicate that the above reconstruction algorithm constitutes
a valid method for recovering a shape from its partial moments. Even in the cusped case, pictured in Figure~\ref{fig:cupsed-hypo}, the fitting is
remarkably close, despite the low degree of the moment matrix used.

\begin{figure}[t]
\begin{center}
\begin{minipage}{2.5cm}
{\includegraphics[width=2.20\linewidth]{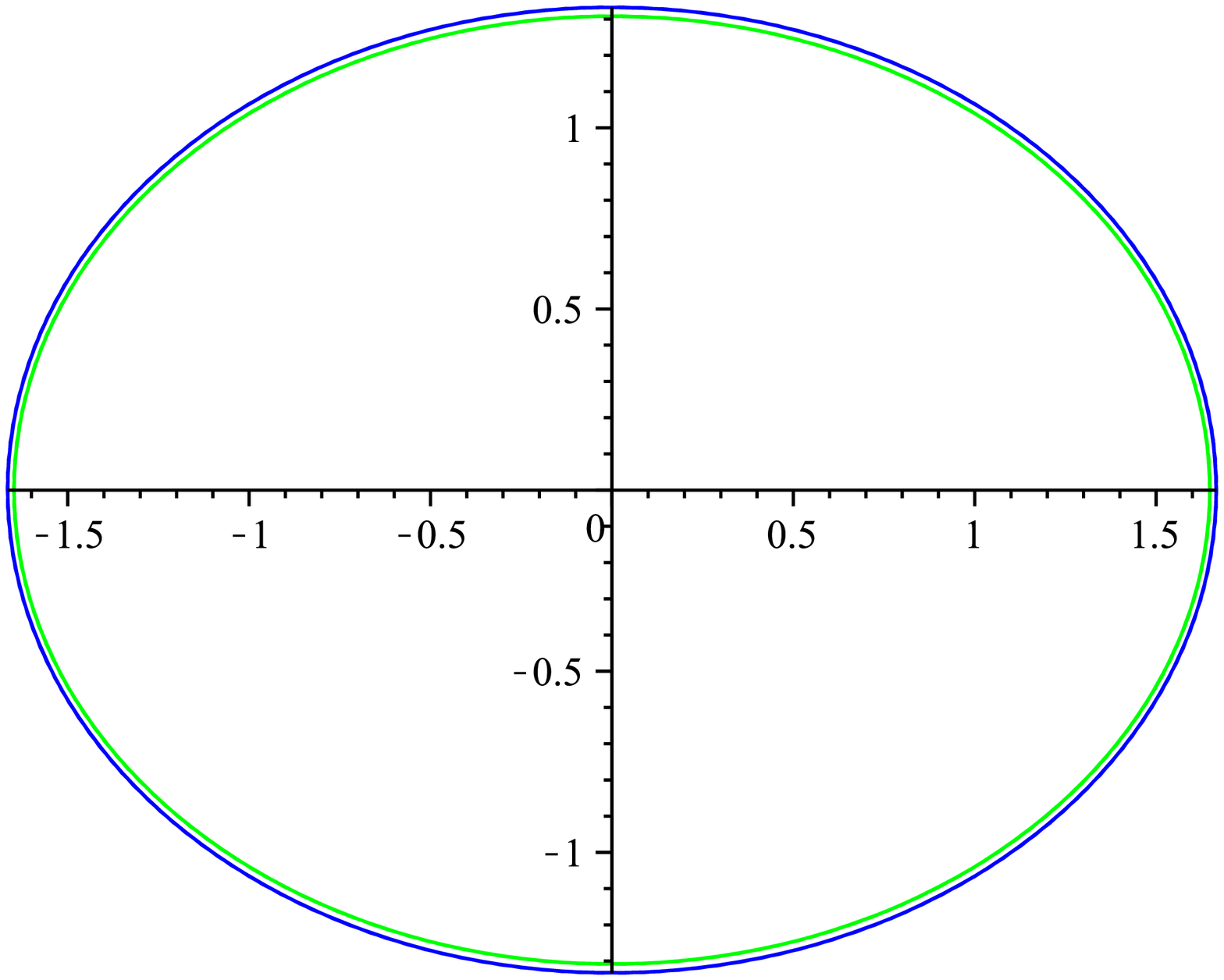}}
\end{minipage}
\qquad\qquad\qquad\qquad\qquad
\begin{minipage}{2.5cm}
{\includegraphics[width=2.20\linewidth]{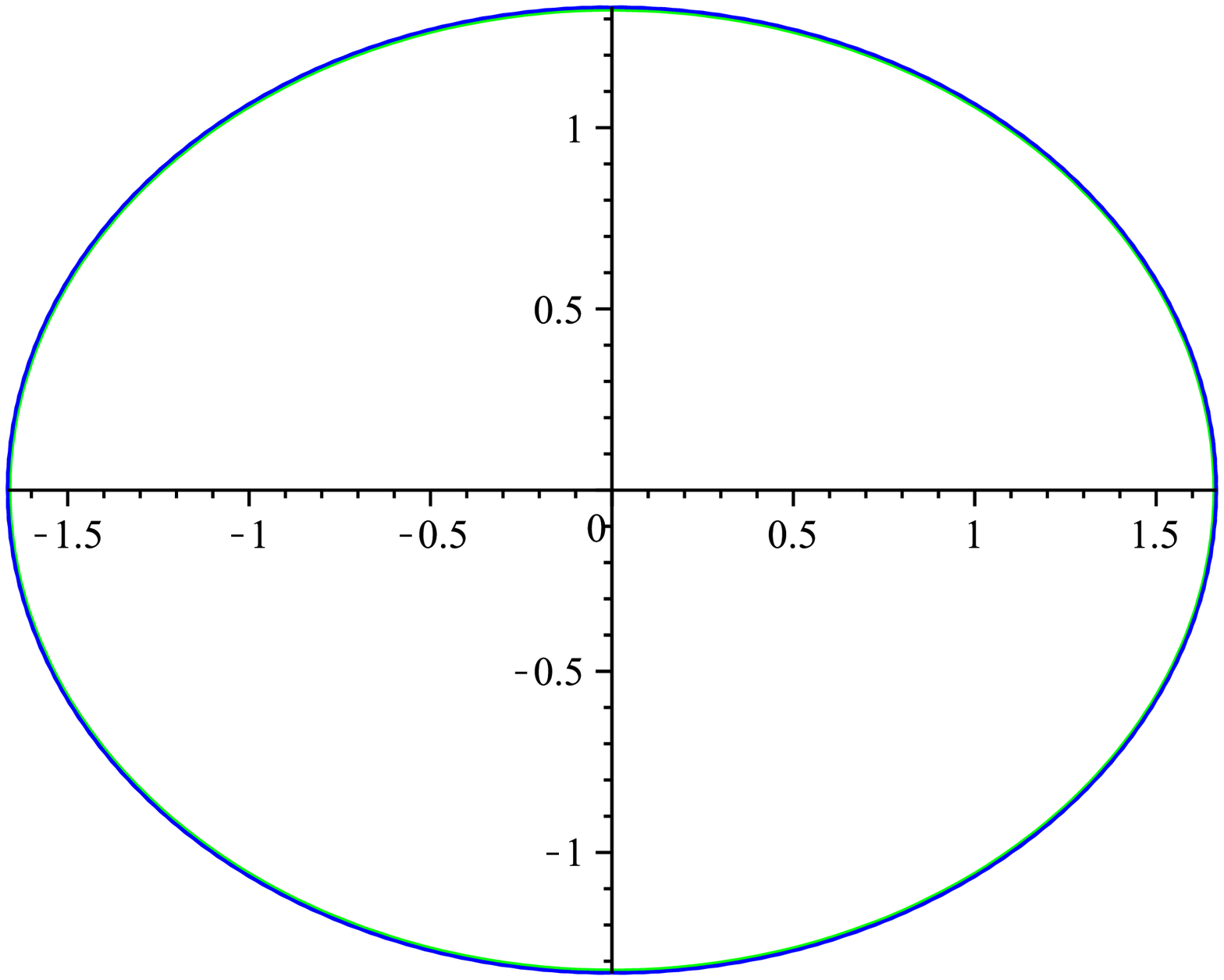}}
\end{minipage}\qquad\qquad\qquad\qquad\qquad
\caption{Recovery of an  ellipse, with $n=10$ (left) and $n=20$ (right).}
\label{fig:ellipse}
\end{center}
\end{figure}

In Figure~\ref{fig:ellipse} we illustrate the reconstruction of an ellipse, where, with the notation of Theorem \ref{thm:recalg}, $\varrho=1/3$.

\begin{figure}
\begin{center}
\includegraphics[width=0.60\linewidth]{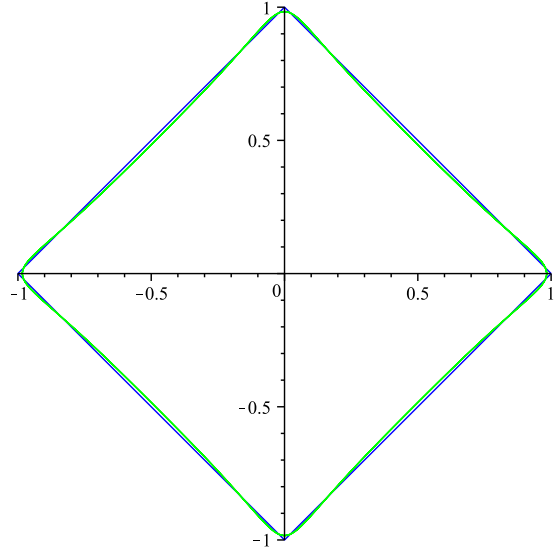}
\caption{Recovery of a  square, with $n=16$.}
\label{fig:square}
\end{center}
\end{figure}
In Figure~\ref{fig:square} we reconstruct a square by using the complex moments up to the degree $16$. We have chosen
$n=16$, so that the result can be compared with the recovery of a square, as shown on page 1067 of
\cite{GHMP}, obtained using the \textit{Exponential Transform Algorithm}.
This is another
reconstruction algorithm based on moments.

\begin{figure}
\begin{center}
\begin{minipage}[t]{2.0cm}
{\includegraphics[scale=0.30]{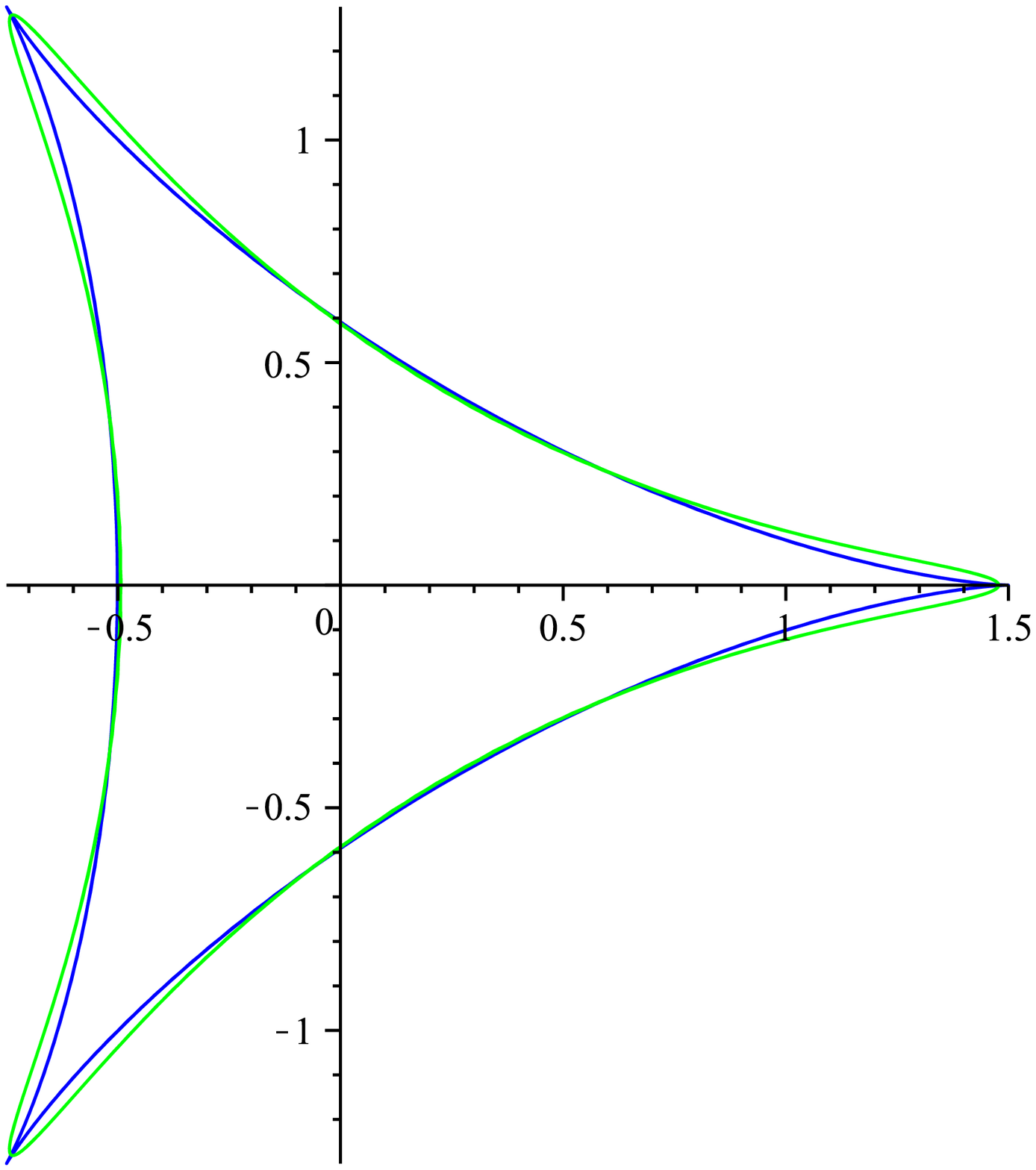}}
\end{minipage}
\qquad\qquad\qquad\qquad\qquad\qquad
\begin{minipage}[t]{2.0cm}
{\includegraphics[scale=0.30]{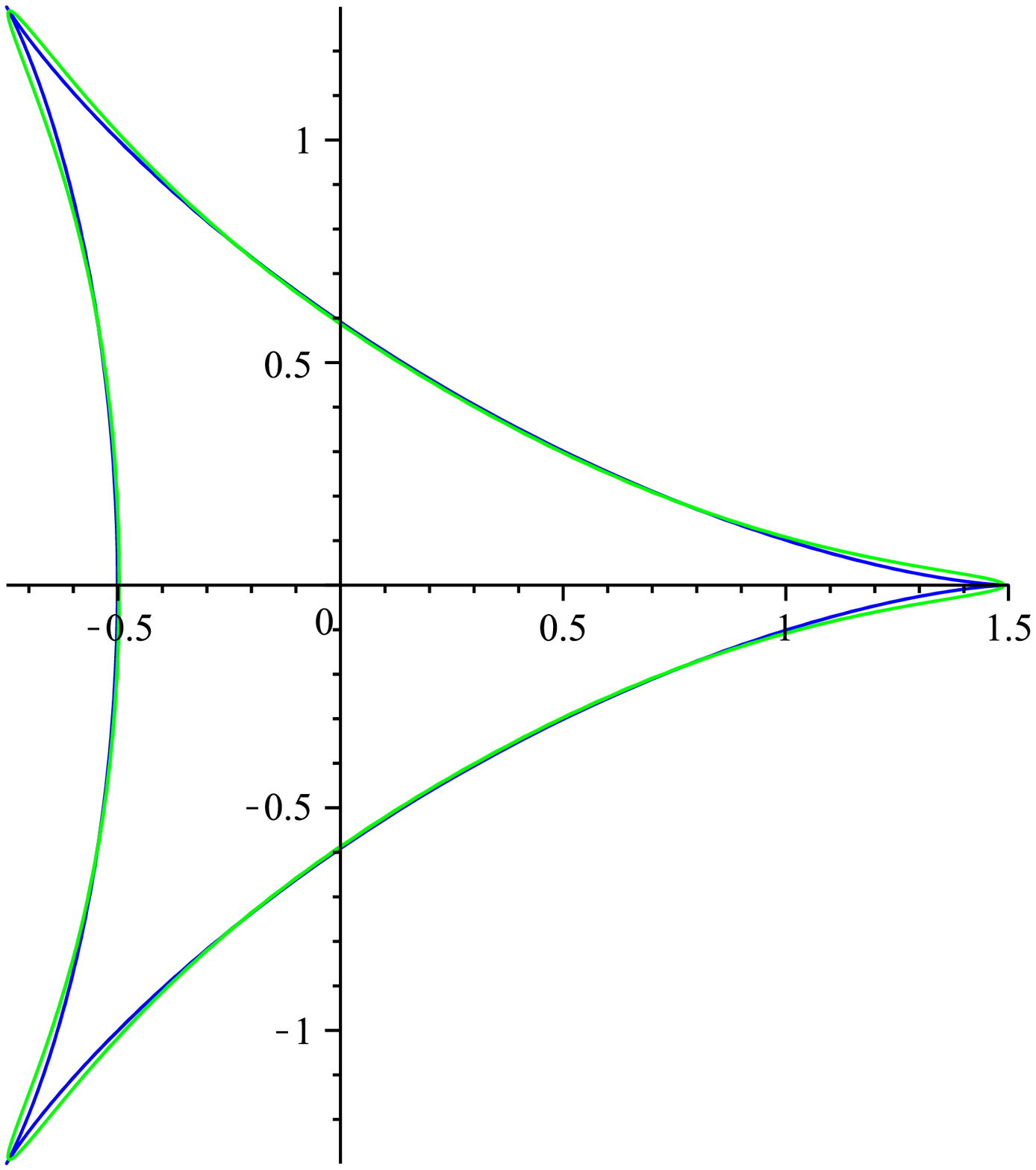}}
\end{minipage}\qquad\qquad\qquad\qquad
\caption{Recovery of a 3-cusped hypocycloid, with $n=20$ (left) and $n=30$ (right).}
\label{fig:cupsed-hypo}
\end{center}
\end{figure}

In order to show that the proposed reconstruction algorithm works equally well for domains where the results of
neither Theorem~\ref{thm:recalg} nor that of Remark~\ref{rem:recalg} apply,
we use it for the recovery of the boundary of the 3-cusped hypocycloid defined by
$$
\Gamma:=\{z=\Psi(w)=w+\frac{1}{2w^2},\quad w\in\mathbb{T}\}.
$$
The application of the algorithm with $n=20$ and $n=30$ is depicted in Figure~\ref{fig:cupsed-hypo}.

Comparing the performance of the above algorithm with that of \cite{St-arXiv1202} for the cases of the ellipse
and the hypocycloid, it appears that the latter algorithm performs slightly better. On the other hand,
both algorithms perform better than the reconstruction algorithms of \cite{GHMP} for the case of the square.
More definitive comparisons will require further experimentation and analysis of all three reconstruction algorithms.

\section{Numerical Results}
In this section we employ the first three steps of the reconstruction algorithm in order to present numerical results
that illustrate the order of convergence in (\ref{th:main-pw-analytic-1}) and (\ref{th:main-pw-analytic-2}),
that is the order in approximating $b$  and $b_2$ by $b^{(n)}$ and $b^{(n)}_2$, respectively.
We consider the case where $\Gamma$ is the equilateral triangle $\Pi_3$ with vertices at $1$, $e^{2i\pi/3}$,
and $e^{4i\pi/3}$. Then, by using the
Schwarz-Christoffel formula it is not difficult to see that the coefficients
$b_n$ of the associated conformal map (\ref{eq:Psi}) are given by $b_0=0$ and
\begin{equation}\label{eq:bn-square}
b_n= \left\{
\begin{array}{ll}
\text{cap}(\Pi_3)(-1)^{m+1}\binom{2/3}{m}\frac{1}{n}, &\text{if } l=1,\\
0, & \text{if } l\ne 1,
\end{array}
\right.
\end{equation}
for $n=3m-l$, $m\in\mathbb{N}$ and $l\in\{0,1,2\}$, where $\binom{2/3}{m}$ denotes the binomial coefficient;
see, e.g., \cite[\S7.8]{St-arXiv1202}. Furthermore, it follows by using the
properties of hypergeometric functions that
\begin{equation}\label{eq:cap-square}
b=\text{cap}(\Pi_3)=\frac{3}{2}\frac{\Gamma(1/3)^3}{4\pi^2}=0.730\,499\,243\,103\cdots,
\end{equation}
where $\Gamma(x)$ denotes the Gamma function with argument $x$.

By using the rotational property of the equilateral triangle, as this is reflected in the relation (\ref{eq:rotsym-pn}),
it is easy to see that
$$
b_{n-k,n}=0, \quad \mbox{ if }\quad k\notin\{2,5,8,\ldots\}.
$$
This is actually the reason why we consider the two approximations $b^{(n)}$ and $b^{(n)}_2$.
Accordingly, we  let $t^{(n)}$ and $t_2^{(n)}$ denote the two errors
\begin{equation}\label{eq:tn-dfn}
t^{(n)}:=b-b^{(n)} \quad\mbox{ and }\quad t_2^{(n)}:=b_2-b_2^{(n)}.
\end{equation}
Then, from Theorem~\ref{th:main-pw-analytic} we have that
\begin{equation}\label{eq:tn-decay}
|t^{(n)}|\le c(\Gamma)\frac{1}{n}\quad \mbox{ and }\quad
|t_2^{(n)}|\le c(\Gamma)\frac{1}{\sqrt{n}},\quad n\in\mathbb{N}.
\end{equation}

In Tables~\ref{tab:cap-square} and \ref{tab:cap-square1} we report the computed values of $b^{(n)}$, $t^{(n)}$
and $b_2^{(n)}$, $t_2^{(n)}$, with $n$ varying from $100$ to $200$. We also report the values of the parameter $s$,
which is designed to test the two hypotheses
$$
|t^{(n)}|\approx 1/n^s\quad\mbox{ and }\quad |t_2^{(n)}|\approx 1/n^s.
$$
This was done by estimating $s$ by means of the two formulae
$$
s_n:=\log\left(\frac{|t^{(n)}|}{|t^{(n+10)}|}\right)\big/\log\left(\frac{n+10}{10}\right)\quad\mbox{and}\quad
s_n:=\log\left(\frac{|t_2^{(n)}|}{|t_2^{(n+10)}|}\right)\big/\log\left(\frac{n+10}{10}\right).
$$

In view of Remark~\ref{rem:arnoldi}, regarding the stability properties of the Arnoldi GS process,
we expect all the figures quoted in the tables to be correct.

\begin{table}
\begin{tabular}{|c|c|c|c|}
\hline $ $
$n$& $b^{(n)}$ & $t^{(n)}$ & $s$  ${\vphantom{\sum^{\sum^{\sum^N}}}}$ \\*[3pt] \hline
100 & 0.730\,487\,539 & 1.17e-05  & 1.9627      \\
110 & 0.730\,489\,536 & 9.70e-06  & 1.9659 \\
120 & 0.730\,491\,062 & 8.18e-06  & 1.9685 \\
130 & 0.730\,492\,255 & 6.98e-06  & 1.9708 \\
140 & 0.730\,493\,204 & 6.03e-06  & 1.9728\\
150 & 0.730\,493\,973 & 5.26e-06  & 1.9745 \\
160 & 0.730\,494\,603 & 4.63e-06  & 1.9761\\
170 & 0.730\,495\,127 & 4.11e-06  & 1.9774 \\
180 & 0.730\,495\,567 & 3.67e-06  & 1.9786\\
190 & 0.730\,495\,940 & 3.30e-06  & 1.9799\\
200 & 0.730\,496\,259 & 2.98e-06  &  --- \\
 \hline
\end{tabular}

\medskip
\caption{Equilateral triangle:  Errors and rates in approximating $b=0.730\,499\,243\,103\cdots$ by $b^{(n)}$.}
\label{tab:cap-square}
\end{table}

\begin{table}
\begin{tabular}{|c|c|c|c|}
\hline $ $
$n$& $b_2^{(n)}$ & $t_2^{(n)}$ & $s$  ${\vphantom{\sum^{\sum^{\sum^N}}}}$ \\*[3pt] \hline
100 & 0.243\,555\,903 & -5.61e-05  & 1.9873      \\
110 & 0.243\,546\,213 & -4.64e-05  & 1.9886      \\
120 & 0.243\,538\,830 & -3.90e-05  & 1.9897      \\
130 & 0.243\,533\,076 & -3.33e-05  & 1.9907      \\
140 & 0.243\,528\,504 & -2.87e-05  & 1.9914      \\
150 & 0.243\,524\,812 & -2.50e-05  & 1.9921      \\
160 & 0.243\,521\,788 & -2.20e-05  & 1.9926   \\
170 & 0.243\,519\,280 & -1.95e-05  & 1.9931  \\
180 & 0.243\,517\,177 & -1.74e-05  & 1.9936   \\
190 & 0.243\,515\,396 & -1.56e-05  & 1.9939  \\
200 & 0.243\,513\,875 & -1.41e-05  & ---   \\
\hline
\end{tabular}

\medskip
\caption{Equilateral triangle: Errors and rates in approximating $b_2:=0.243\,499\,747\,701\cdots$ by $b_2^{(n)}$.}
\label{tab:cap-square1}
\end{table}

It is interesting to note the following regarding the presented results:
\begin{itemize}
\item
The values of $b^{(n)}$ decay monotonically to $b$.
\item
The values of $b_2^{(n)}$ increase monotonically $b_2$.
\item
The values of the parameter $s$ indicate clearly that
$$
|t^{(n)}|\approx 1/n^2\quad\mbox{ and }\quad |t_2^{(n)}|\approx 1/n^2,
$$
This suggests that the two estimates
$$
|t^{(n)}|\le c(\Gamma)\frac{1}{n}\quad \mbox{ and }\quad
|t_2^{(n)}|\le c(\Gamma)\frac{1}{\sqrt{n}},\quad n\in\mathbb{N},
$$
predicted by Theorem~\ref{th:main-pw-analytic} are pessimistic.
\end{itemize}

\section{Proofs}\label{sec:proofs}
\subsection{Proof of Theorem~\ref{th:main-pw-analytic}}
The derivation in the case where $\Gamma$ is piecewise analytic without cusps is based
on results from \cite{St-CR10} and \cite{St-arXiv1202}.
In particular, we utilize the following fact (see \cite[Thm 1.1]{St-CR10}):
\begin{equation}\label{eq:finelambdan}
{\frac{n+1}{\pi}\frac{\gamma^{2(n+1)}}{\lambda_n^2}=1- \alpha_n,}
\end{equation}
where
\begin{equation}\label{eqinthm:finelambdanii}
0\le\alpha_n\le c_1(\Gamma)\,\frac{1}{n}.
\end{equation}
We also note the following estimate \cite[Thm 1.2]{St-CR10}:
\begin{equation}\label{eqinthm:finepn}
{p_n(z)=\sqrt{\frac{n+1}{\pi}}\,\Phi^n(z)\Phi^\prime(z)
\left\{1+A_n(z)\right\}},\quad z\in\Omega,
\end{equation}
where
\begin{eqnarray} \label{eqinthm:finepnii1}
|A_n(z)|\le \frac{c_2(\Gamma)}{\dist(z,\Gamma)\,|\Phi^\prime(z)|}\,\frac{1}{\sqrt{n}}
+c_3(\Gamma)\,\frac{1}{n} .
\end{eqnarray}
Recall that we use $c_k(\Gamma)$, $k=1,2,\ldots$, to denote a non-negative constant that depends on $\Gamma$ only.


The result (\ref{th:main-pw-analytic-1}) follows immediately from (\ref{eq:bn+1n}) and (\ref{eq:finelambdan})--(\ref{eqinthm:finelambdanii}).
For the general case (\ref{th:main-pw-analytic-2}), our proof relies on the use of the auxiliary polynomial
\begin{equation}\label{eq:qndef}
q_{n-1}(z):=G_n(z)-\frac{\gamma^{n+1}}{\lambda_n}p_n(z),\quad n\in\mathbb{N}.
\end{equation}
This is a polynomial of degree at most $n-1$, but it can be identically zero, as the special case when $G$ is a disk
shows.

Next we fix $k=0,1,2,\ldots$. Then from (\ref{eq:Gn-rr}), in conjunction with (\ref{eq:qndef}) and the orthogonality of $p_n$,
we deduce for any $n=k,k+1,k+2,\ldots$, that
\begin{align}\label{eq:expbnk}
\frac{\gamma^{n+1}}{\lambda_n}b_{n-k,n}
&=\langle z\frac{\gamma^{n+1}}{\lambda_n}p_n,p_{n-k}\rangle=\langle zG_n-zq_{n-1},p_{n-k}\rangle\nonumber\\
&=\langle zG_n,p_{n-k}\rangle-\langle zq_{n-1},p_{n-k}\rangle\nonumber\\
&=b\langle G_{n+1},p_{n-k}\rangle+
   \sum_{j=0}^k b_j\langle G_{n-j},p_{n-k}\rangle-\langle zq_{n-1},p_{n-k}\rangle.
\end{align}

Thus, it remains to estimate the two different types of inner products appearing in (\ref{eq:expbnk}), namely
$\langle p_l, G_m\rangle$ and
$\langle zq_m, p_l\rangle$.
This is the objective of the following two lemmas.

\begin{lemma}\label{lem:pkGm}
Assume that $\Gamma$ is piecewise analytic without cusps. Then,
for $l=0,1,2,\ldots$, it holds that
\begin{equation}\label{lem:pkGm-an}
\langle p_l, G_m\rangle= \left\{
\begin{array}{ll}
{\gamma^{m+1}}/{\lambda_m}, & m=l,\\
\xi_m, & m=l+1,l+2,\ldots,
\end{array}
\right.
\end{equation}
where
\begin{equation}\label{eq:rho-esti}
|\xi_m|\le c_1(\Gamma)\frac{1}{m}.
\end{equation}
\end{lemma}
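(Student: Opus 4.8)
The plan is to compute the inner product $\langle p_l, G_m\rangle$ by exploiting the defining relation (\ref{eq:PhinPhipGn}) for the Faber polynomial of the second kind, which gives $G_m(z)=\Phi^m(z)\Phi'(z)-H_m(z)$. Substituting this into $\langle p_l, G_m\rangle = \int_G p_l(z)\overline{G_m(z)}\,dA$ splits the quantity into two pieces: the principal term $\int_G p_l(z)\overline{\Phi^m(z)\Phi'(z)}\,dA$ and the correction $-\int_G p_l(z)\overline{H_m(z)}\,dA$. The idea is that the principal term reproduces, via the strong asymptotics (\ref{eqinthm:finepn}), something close to the leading behaviour of $p_m$, so that its pairing against $p_l$ is governed by orthogonality; the correction term is then shown to be small using Lemma~\ref{lem:Hn}.

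First I would rewrite the principal term using the asymptotic formula (\ref{eqinthm:finepn}) in reverse: since $p_m(z)=\sqrt{(m+1)/\pi}\,\Phi^m(z)\Phi'(z)\{1+A_m(z)\}$, we have
\begin{equation*}
\Phi^m(z)\Phi'(z)=\sqrt{\frac{\pi}{m+1}}\,\frac{p_m(z)}{1+A_m(z)},
\end{equation*}
so that $\Phi^m\Phi'$ equals $\sqrt{\pi/(m+1)}\,p_m$ plus a term carrying the factor $A_m/(1+A_m)$. Pairing against $p_l$, the leading contribution becomes $\sqrt{\pi/(m+1)}\,\langle p_l,p_m\rangle$, which by orthonormality is $0$ when $l\neq m$ and $\sqrt{\pi/(m+1)}$ when $l=m$. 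Using $\lambda_m=\sqrt{(m+1)/\pi}\,\gamma^{m+1}(1+o(1))$ from (\ref{eq:finelambdan}), the diagonal value $\sqrt{\pi/(m+1)}$ is consistent with the claimed $\gamma^{m+1}/\lambda_m$; indeed on the diagonal one expects to recover the exact value $\gamma^{m+1}/\lambda_m$ by a cleaner direct argument, since $\langle p_m,G_m\rangle$ can be evaluated from the leading coefficients of $G_m$ and $p_m$ together with the fact that $G_m-(\gamma^{m+1}/\lambda_m)p_m$ has degree at most $m-1$ (this is exactly $q_{m-1}$ from (\ref{eq:qndef})), hence is orthogonal to $p_m$. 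This degree-based argument is the quickest route to the $m=l$ case and avoids any error analysis there.

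For the off-diagonal case $m>l$, the estimate (\ref{eq:rho-esti}) must come from controlling two error contributions: the $H_m$ correction and the $A_m$ correction. For the $H_m$ piece, I would apply Cauchy--Schwarz to $\int_G p_l\,\overline{H_m}\,dA$, bounding it by $\|p_l\|_{L^2(G)}\,\|H_m\|_{L^2(G)}=\|H_m\|_{L^2(G)}$; the remaining task is to show $\|H_m\|_{L^2(G)}\le c(\Gamma)/m$. Here I would use the relation (\ref{eq:GnFn+1}), namely $H_m=E_{m+1}'/(m+1)$, together with the decay estimate (\ref{eq:bn-NS-Estimate}) on the Laurent coefficients $b_n$ (which control the coefficients of $E_{m+1}$ through the Faber machinery) to extract the factor $1/m$; Lemma~\ref{lem:Hn} guarantees that $H_m$ is indeed square integrable over $\Omega$, and one then transfers the bound to $G$ via the boundary. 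For the $A_m$ piece, Cauchy--Schwarz together with the pointwise bound (\ref{eqinthm:finepnii1}) on $A_m(z)$ would be used, integrating the singularity $1/(\dist(z,\Gamma)|\Phi'(z)|)$ against the area measure; the piecewise-analytic-without-cusps hypothesis ensures $|\Phi'|$ does not degenerate too badly so that this integral converges and yields the $1/\sqrt m$ or $1/m$ order.

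The main obstacle I anticipate is the $A_m$ contribution to the off-diagonal estimate: the pointwise bound (\ref{eqinthm:finepnii1}) blows up like $1/\dist(z,\Gamma)$ near the boundary, and one must verify that after multiplying by $|\Phi^m\Phi'|$ and integrating, the singularity is integrable and produces a clean $1/m$ rate rather than something slower. The reason this works is that the weight $|\Phi^m(z)|$ decays geometrically away from $\Gamma$ but is of size roughly $1$ only in a shrinking neighbourhood of $\Gamma$ of width $\sim 1/m$, so the effective mass of the integral is concentrated in a thin collar where the $1/\dist$ factor is tamed by the shrinking width; making this heuristic rigorous — balancing the concentration of $|\Phi^m|$ against the boundary singularity under the no-cusp geometry — is the technical heart of the lemma. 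The treatment of corners (where exterior angles lie strictly between $0$ and $2\pi$) requires care to confirm that $|\Phi'|$ neither vanishes nor explodes in a way that destroys integrability, which is precisely what the no-cusp assumption secures.
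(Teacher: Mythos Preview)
Your diagonal case $m=l$ is fine and matches the paper's argument exactly.

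For $m>l$, however, there is a genuine gap: the identity $G_m=\Phi^m\Phi'-H_m$ from (\ref{eq:PhinPhipGn}) and the asymptotic formula (\ref{eqinthm:finepn}) for $p_m$ are both valid only for $z\in\Omega$, the \emph{exterior} of $\overline{G}$. The inner product $\langle p_l,G_m\rangle$ is an area integral over $G$, and for a boundary that is merely piecewise analytic (with corners) the map $\Phi$ does not extend analytically into $G$; thus the integrals $\int_G p_l\,\overline{\Phi^m\Phi'}\,dA$ and $\int_G p_l\,\overline{H_m}\,dA$ in your splitting are not defined. Likewise Lemma~\ref{lem:Hn} gives $H_m\in L^2(\Omega)$, not $L^2(G)$, so your Cauchy--Schwarz step $\|H_m\|_{L^2(G)}$ has no meaning. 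The heuristic you sketch about $|\Phi^m|$ concentrating in a collar of width $\sim 1/m$ is a picture of what happens in $\Omega$, and it does not transfer to the interior integral you wrote down.

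The paper circumvents this by first using $G_m=F_{m+1}'/(m+1)$ and Green's formula to convert the area integral over $G$ into a contour integral over $\Gamma$:
\[
\langle p_l,G_m\rangle=\frac{\pi}{m+1}\cdot\frac{1}{2\pi i}\int_\Gamma p_l(z)\,\overline{F_{m+1}(z)}\,dz.
\]
On $\Gamma$ one may now legitimately invoke the splitting $F_{m+1}=\Phi^{m+1}+E_{m+1}$; since $|\Phi|=1$ on $\Gamma$, the piece $\overline{\Phi^{m+1}}=1/\Phi^{m+1}$ gives a contour integral that vanishes by the residue theorem (the integrand is analytic in $\Omega$ and decays at infinity because $m\ge l+1$). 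The remaining $E_{m+1}$ term is bounded via an inequality from \cite{St-arXiv1202} by $c(\Gamma)\|H_m\|_{L^2(\Omega)}$, and a separate result there gives $\|H_m\|_{L^2(\Omega)}=O(1/m)$. No use of the strong asymptotics (\ref{eqinthm:finepn}) or the error $A_m$ is needed at all for this lemma.
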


\begin{proof}
For the special case where $m=l$ the result is a trivial consequence of the orthonormality property of the
polynomial $p_m$ and the fact that $G_m$ is a polynomial of exact degree $m$ with leading coefficient $\gamma^{m+1}$.
That is,

\begin{align}\label{eq:pmGm}
\langle p_m, G_m\rangle
&=\langle p_m, \gamma^{m+1}z^m+\cdots\rangle=\langle p_m, \gamma^{m+1}z^m\rangle\nonumber\\
&=\gamma^{m+1}\langle p_m, \frac{1}{\lambda_m}p_m\rangle=\frac{\gamma^{m+1}}{\lambda_m}.
\end{align}
Assume now that $m\in\{l+1,l+2,\ldots\}$. Then, an application of Green's formula, the splitting (\ref{eq:PhinPhipGn})
and the residue theorem give:
\begin{align}\label{eq:Gm-Em}
\langle p_l, G_m\rangle
&=\int_G p_l(z)\overline{G_m(z)}dA(z)=\int_G p_l(z)\overline{\frac{F^\prime_{m+1}(z)}{m+1}}dA(z)\nonumber\\
&=\frac{\pi}{m+1}\left\{\frac{1}{2\pi i}\int_\Gamma p_l(z) \overline{F_{m+1}(z)}dz \right\}\nonumber\\
&=\frac{\pi}{m+1}\left\{\frac{1}{2\pi i}\int_\Gamma p_l(z) \overline{\Phi^{m+1}(z)}dz
+\frac{1}{2\pi i}\int_\Gamma p_l(z) \overline{E_{m+1}(z)}dz \right\}\nonumber\\
&=\frac{\pi}{m+1}\left\{\frac{1}{2\pi i}\int_\Gamma \frac{p_l(z)}{\Phi^{m+1}(z)}dz
+\frac{1}{2\pi i}\int_\Gamma p_l(z) \overline{E_{m+1}(z)}dz \right\}\nonumber\\
&=\frac{1}{2(m+1)i}\int_\Gamma {p_l(z)} \overline{E_{m+1}(z)}dz.
\end{align}

To conclude the proof we use the estimate given in \cite[Lem.~2.5]{St-arXiv1202}, to obtain
\begin{equation}\label{eq:Em-Hm}
\left|\frac{1}{2 i}\int_\Gamma p_l(z)\overline{E_{m+1}(z)}dz\right|\le c_2(\Gamma)\|p_l\|_{L^2(G)}
\left[\int_\Omega|E^\prime_{m+1}(z)|^2dA(z)\right]^{1/2},
\end{equation}
where we made use of the fact that $E^\prime_{m+1}\in L^2(\Omega)$ (see Lemma~\ref{lem:Hn}) and  that a piecewise  analytic without cusps Jordan curve is
quasiconformal and rectifiable.

Therefore, from (\ref{eq:Gm-Em}), the second relation in (\ref{eq:GnFn+1}) and (\ref{eq:Em-Hm}), we have
\begin{equation}\label{eq:plGm}
\left|\langle p_l, G_m\rangle\right|\le c_3(\Gamma)\left[\int_\Omega|H_m(z)|^2dA(z)\right]^{1/2},
\end{equation}
and the required result follows, because the last integral is $O(1/m^2)$; see
\cite[Thm 2.4]{St-arXiv1202}.
\end{proof}

\begin{lemma}\label{lem:zqmpl}
Assume that $\Gamma$ is piecewise analytic without cusps. Then,
for every $m\in\mathbb{N}$ and $l=0,1,2,\ldots$, it holds that
\begin{equation}\label{lem:zqmpl-1}
|\langle zq_m, p_l\rangle|\le c_1(\Gamma)\frac{1}{m}.
\end{equation}
\end{lemma}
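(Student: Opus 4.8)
The plan is to estimate $\langle zq_m,p_l\rangle$ with the same boundary–integral machinery that drives Lemma~\ref{lem:pkGm}, keeping everything phrased through area integrals over $\Omega$ so as to avoid the pointwise degeneration of the strong asymptotics \eqref{eqinthm:finepn} on $\Gamma$ (the bound \eqref{eqinthm:finepnii1} on $A_{m+1}$ blows up as $z\to\Gamma$, so boundary values of the asymptotics are unusable).

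First the easy reductions. Since $q_m=q_{(m+1)-1}$ has degree at most $m$, the polynomial $zq_m$ has degree at most $m+1$, so orthogonality gives $\langle zq_m,p_l\rangle=0$ for $l\ge m+2$ and I may assume $l\le m+1$. Expanding $q_m$ in the orthonormal basis and using the Hessenberg structure,
\[
\langle zq_m,p_l\rangle=\sum_{j=0}^{m}\langle q_m,p_j\rangle\,b_{l,j},
\]
where, because $\langle p_{m+1},p_j\rangle=0$ for $j\le m$, the definition \eqref{eq:qndef} gives $\langle q_m,p_j\rangle=\langle G_{m+1},p_j\rangle=\overline{\langle p_j,G_{m+1}\rangle}$. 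By Lemma~\ref{lem:pkGm} (whose bound \eqref{eq:plGm} uses only $\|p_j\|=1$), each coefficient is $O(1/m)$ \emph{uniformly in} $j$, since $\int_\Omega|H_{m+1}|^2\,dA=O(1/m^2)$. This shows the coefficients are individually small, but the triangle-inequality bound $\sum_j|\langle q_m,p_j\rangle|\,|b_{l,j}|$ appears too lossy to deliver the sharp rate uniformly in $l$, as it ignores the cancellations between the entries of a fixed row.

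To capture that cancellation I would instead proceed directly, writing $\langle zq_m,p_l\rangle=\langle zG_{m+1},p_l\rangle-\tfrac{\gamma^{m+2}}{\lambda_{m+1}}\langle zp_{m+1},p_l\rangle$ and treating $zG_{m+1}$ through the recurrence \eqref{eq:Gn-rr}. Substituting $G_i=\Phi^i\Phi'+H_i$ on $\Omega$, the $\Phi$–part reorganizes as $\Phi^{m+1}\Phi'\bigl(b\Phi+\sum_{j\ge0}b_j\Phi^{-j}\bigr)=z\,\Phi^{m+1}\Phi'$ via $\Psi(\Phi)=z$, so the Toeplitz part is isolated and the remainder consists of the symbol tail together with the functions $H_{m+1-j}$, all square integrable over $\Omega$ by Lemma~\ref{lem:Hn}. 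Applying Green's formula as in \eqref{eq:Gm-Em}, converting the $z$–factor by $z=\Psi(\Phi)$ on $\Gamma$, and discarding (by the residue theorem, as for the $\Phi^{m+1}$–term of Lemma~\ref{lem:pkGm}) the boundary integrals of functions analytic in $\Omega$ and vanishing sufficiently fast at infinity, reduces everything to integrals $\tfrac1{2i}\int_\Gamma f\,\overline{E_{\cdot}}\,dz$ with $f$ analytic in $G$. These are bounded by $c(\Gamma)\|f\|_{L^2(G)}\|E_{\cdot}'\|_{L^2(\Omega)}$ through \cite[Lem.~2.5]{St-arXiv1202}. Using $E_{m+1}'=(m+1)H_m$ with the sharp estimate $\|H_m\|_{L^2(\Omega)}=O(1/m)$ of \cite[Thm~2.4]{St-arXiv1202}, together with $\gamma^{m+2}/\lambda_{m+1}=\sqrt{\pi/(m+2)}\,\sqrt{1-\alpha_{m+1}}$ and $1-\sqrt{1-\alpha_{m+1}}=O(1/m)$ from \eqref{eq:finelambdan}--\eqref{eqinthm:finelambdanii}, each surviving term is $O(1/m)$, uniformly in $l$.

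I expect the main obstacle to be exactly this uniformity in $l$ at the sharp rate $1/m$. Taken separately, $\langle zG_{m+1},p_l\rangle$ and $\tfrac{\gamma^{m+2}}{\lambda_{m+1}}\langle zp_{m+1},p_l\rangle$ are only of size $O(1/\sqrt m)$ — both carry a factor comparable to $\gamma^{m+2}/\lambda_{m+1}\sim\sqrt{\pi/m}$ — so the desired bound emerges only after their leading contributions cancel. This cancellation is precisely the statement that $q_m$ is small, encoded analytically in the factor $1-\sqrt{1-\alpha_{m+1}}$; in particular the ``diagonal'' inner product $\langle G_l,p_l\rangle=\gamma^{l+1}/\lambda_l$ produced by the recurrence is itself of order $1/\sqrt m$ and must be matched term-by-term against the $p_{m+1}$–contribution. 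Since the relevant pointwise asymptotics degenerate on $\Gamma$, this matching cannot be read off boundary values and must be tracked entirely through the $L^2(\Omega)$ norms of $H_m$ and $E_m'$; organizing the residue computation so that no term of size $1/\sqrt m$ is left unmatched, uniformly in $l$, is the delicate point.
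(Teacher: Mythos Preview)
Your proposal is not a proof but an outline that ends by flagging, rather than resolving, the main difficulty: you explicitly identify the cancellation of $O(1/\sqrt m)$ terms as ``the delicate point'' without actually carrying it out. So as written there is a genuine gap.

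More importantly, you have missed the direct argument. The paper's proof is two lines: Cauchy--Schwarz gives
\[
|\langle zq_m,p_l\rangle|\le \|zq_m\|_{L^2(G)}\|p_l\|_{L^2(G)}\le \max_{z\in\overline G}|z|\cdot\|q_m\|_{L^2(G)},
\]
and then one simply quotes the norm estimate $\|q_m\|_{L^2(G)}\le c(\Gamma)/m$ from \cite[Cor.~2.1]{St-arXiv1202}. That corollary in turn rests on $\|q_m\|_{L^2(G)}\le c(\Gamma)\|H_m\|_{L^2(\Omega)}$ together with the bound $\|H_m\|_{L^2(\Omega)}=O(1/m)$ --- exactly the ingredient you invoked in your first paragraph but never exploited in this way. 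The uniformity in $l$ that worries you is then automatic, since $\|p_l\|=1$.

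Your elaborate boundary-integral scheme is essentially an attempt to reprove the norm bound on $q_m$ indirectly, term by term after pairing against $p_l$. Even if it could be completed, it would be re-deriving a result already available off the shelf. The cancellation you are chasing is precisely what is packaged in $\|q_m\|_{L^2(G)}=O(1/m)$; once that is known, there is nothing delicate left to do.
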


\begin{proof}
The result is a simple consequence of Corollary~2.1 in \cite{St-arXiv1202} which states
$$
\|q_m\|_{L^2(G)}\le c_2(\Gamma)\frac{1}{m},
$$
and the Cauchy-Schwarz inequality:
$$
|\langle zq_m, p_l\rangle|\le \|zq_m\|_{L^2(G)}\|p_l\|_{L^2(G)}\le \max\{|z|:z\in\Gamma\}\,\|q_m\|_{L^2(G)}.
$$
\end{proof}

Returning to the proof of Theorem~\ref{th:main-pw-analytic}, we apply the results of the two previous lemmas
to (\ref{eq:expbnk}) and use (\ref{eq:bn-NS-Estimate}) to obtain:
\begin{align}\label{eq:bnkn-11}
\frac{\gamma^{n+1}}{\lambda_n}b_{n-k,n}
&=b\langle G_{n+1},p_{n-k}\rangle+\sum_{j=0}^{k-1} b_j\langle G_{n-j},p_{n-k}\rangle
+ b_k\frac{\gamma^{n-k+1}}{\lambda_{n-k}}
-\langle zq_{n-1},p_{n-k}\rangle\nonumber\\
&=O\left(\frac{1}{n}\right)+\sum_{j=1}^{k-1}O\left(\frac{1}{(n-j)\,j^{1+\omega}}\right)
+b_k\frac{\gamma^{n-k+1}}{\lambda_{n-k}},
\end{align}
where $0<\omega<2$, and $O$ does not depend on $n$ or $k$. Furthermore,
from (\ref{eq:finelambdan})--(\ref{eqinthm:finelambdanii}) we have:
\begin{equation}\label{eq:goverl-1}
\frac{\gamma^{n+1}}{\lambda_n}=\sqrt{\frac{\pi}{n+1}}\left[1+O\left(\frac{1}{n}\right)\right]
\end{equation}
and
\begin{equation}\label{eq:goverl-2}
\frac{\lambda_{n-k}}{\gamma^{n-k+1}}=\sqrt{\frac{n-k+1}{\pi}}\left[1+O\left(\frac{1}{n-k+1}\right)\right].
\end{equation}
Thus, by multiplying both sides of (\ref{eq:bnkn-11}) by $\lambda_{n-k}/\gamma^{n-k+1}$  we get
\begin{equation*}\label{eq:bnkn-12}
\frac{\lambda_{n-k}}{\gamma^{n-k+1}}\frac{\gamma^{n+1}}{\lambda_n}b_{n-k,n}
=b_k+\frac{\lambda_{n-k}}{\gamma^{n-k+1}}\left[O\left(\frac{1}{n}\right)+
\sum_{j=1}^{k-1}O\left(\frac{1}{(n-j)\,j^{1+\omega}}\right)\right],
\end{equation*}
which, in view of the estimates (\ref{eq:goverl-1})--(\ref{eq:goverl-2}), yields for $n\ge k\ge 0$, $n\ge 1$, that
\begin{align}\label{eq:0const}
\sqrt{\frac{n-k+1}{n+1}}&b_{n-k,n}=b_k\left[1+O\left(\frac{1}{n-k+1}\right)\right]\nonumber\\
+&O(\sqrt{n-k+1})\left[O\left(\frac{1}{n}\right)+\sum_{j=1}^{k-1}
O\left(\frac{1}{(n-j)\,j^{1+\omega}}\right)\right],
\end{align}
where an empty sum equals zero.
This leads, for fixed $k$ and $n\to\infty$, to the required estimate (\ref{th:main-pw-analytic-2}),
where now $O$ depends on $k$ but not on $n$.
\qed

\subsection{Proof of Theorem~\ref{th:main-smooth}}\label{sec:proof-smooth}
If $\Gamma\in C(p+\alpha)$, with $p+\alpha>1/2$,
then the following asymptotic formulas  hold as $n\to\infty$, see \cite[p.\ 19--20]{Su74}:
\begin{equation}\label{eq:finelambdan-sm}
{\sqrt{\frac{n+1}{\pi}}\frac{\gamma^{n+1}}{\lambda_n}=1+O\left(\frac{1}{n^{2(p+\alpha)}}\right)}
\end{equation}
and
\begin{equation}\label{eq:finepn-sm}
p_n(z)=\sqrt{\frac{n+1}{\pi}}\,\Phi^n(z)\Phi^\prime(z)
\left\{1+O\left(\frac{\log n}{n^{p+\alpha}}\right)\right\},\quad z\in\overline{\Omega}.
\end{equation}

The proof of the theorem goes along similar lines as the proof of Theorem~\ref{th:main-pw-analytic} given above.
More precisely, for deriving the result for $b_{n+1,n}$ we use the estimate (\ref{eq:finelambdan-sm}) in the place of
(\ref{eq:finelambdan})--(\ref{eqinthm:finelambdanii}).

For the general case $k=0,1,\ldots$, we need estimates for the inner products
$\langle p_l, G_m\rangle$ and  $\langle zq_m, p_l\rangle$. This is done in the following two lemmas, which play the
role of Lemma~\ref{lem:pkGm} and Lemma~\ref{lem:zqmpl} in the proof of Theorem~\ref{th:main-pw-analytic}.

\begin{lemma}\label{lem:pkGm-sm}
Assume that $\Gamma\in C(p+1,\alpha)$, with $p+\alpha>1/2$, then
for $l=0,1,2,\ldots$, it holds that
\begin{equation}\label{eq:pkGm-sm}
\langle p_l, G_m\rangle= \left\{
\begin{array}{ll}
{\gamma^{m+1}}/{\lambda_m}, & m=l,\\
\xi_m, & m=l+1,l+2,\ldots,
\end{array}
\right.
\end{equation}
where
\begin{equation}\label{eq:rho-esti-sm}
|\xi_m|\le c_1(\Gamma)\frac{1}{m^{p+\alpha+1/2}}.
\end{equation}
\end{lemma}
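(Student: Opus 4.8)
The plan is to mirror the proof of Lemma~\ref{lem:pkGm} essentially verbatim and to isolate the single place where the stronger hypothesis $\Gamma\in C(p+1,\alpha)$ enters. For $m=l$ the value $\gamma^{m+1}/\lambda_m$ comes, exactly as in \eqref{eq:pmGm}, from the orthonormality of $p_m$ together with the fact that $G_m$ has exact degree $m$ and leading coefficient $\gamma^{m+1}$; this uses nothing about the boundary. For $m>l$ I would reproduce the chain \eqref{eq:Gm-Em}: Green's formula turns $\langle p_l,G_m\rangle$ into a boundary integral, the splitting \eqref{eq:PhinPhipGn} together with $G_m=F_{m+1}'/(m+1)$ (see \eqref{eq:GnFn+1}) separates a $\Phi^{m+1}$-term from an $E_{m+1}$-term, and the residue theorem kills the first integral because $p_l/\Phi^{m+1}=O(z^{-(m+1-l)})$ with $m+1-l\ge 2$ at infinity. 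All of these steps require only that $\Gamma$ be rectifiable and quasiconformal, which holds for every curve in $C(p+1,\alpha)$ with $p+\alpha>1/2$ (such a curve is at least $C^{1}$-smooth). Hence the reduction
$$
|\langle p_l,G_m\rangle|\le c_2(\Gamma)\,\|H_m\|_{L^2(\Omega)},
$$
obtained from \cite[Lem.~2.5]{St-arXiv1202}, the relation $H_m=E_{m+1}'/(m+1)$, and Lemma~\ref{lem:Hn}, carries over without change from \eqref{eq:Em-Hm}--\eqref{eq:plGm}.

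Consequently the whole lemma reduces to the sharpened decay estimate $\int_\Omega|H_m|^2\,dA=O(m^{-(2(p+\alpha)+1)})$, which replaces the $O(1/m^2)$ bound used in the piecewise analytic case; squaring, this is exactly what is needed to produce $\|H_m\|_{L^2(\Omega)}=O(m^{-(p+\alpha+1/2)})$ and hence \eqref{eq:rho-esti-sm}. This is where the hypothesis $\Psi^{(p+1)}\in\textup{Lip}\,\alpha$ (see \cite{Su74}) is decisive, since it forces the Laurent coefficients of $\Psi$ in \eqref{eq:Psi} to decay like $|b_j|=O(j^{-(p+1+\alpha)})$. To exploit this I would transplant the integral to the disk: with $z=\Psi(w)$ and $\Phi^m(\Psi(w))\Phi'(\Psi(w))=w^m/\Psi'(w)$, the splitting \eqref{eq:PhinPhipGn} yields the identity
$$
H_m(\Psi(w))\,\Psi'(w)=w^m-G_m(\Psi(w))\,\Psi'(w),\qquad |w|>1,
$$
whose left-hand side, by \eqref{eq:Gndef} and $\Psi(\infty)=\infty$, is analytic in $\{|w|>1\}$ with an expansion $\sum_{k\ge 2}d_k(m)\,w^{-k}$ in strictly negative powers. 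Orthogonality of the $w^{-k}$ over circles then gives $\int_\Omega|H_m|^2\,dA=\pi\sum_{k\ge 2}|d_k(m)|^2/(k-1)$.

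It remains to control the tail coefficients $d_k(m)$, which are precisely the negative Laurent coefficients of $G_m(\Psi(w))\Psi'(w)$. Expanding this product and using the recurrence \eqref{eq:Gn-rr} — equivalently, the fact that multiplication by $z$ in the basis $\{G_n\}$ is the Toeplitz operator with symbol $\Psi$ — writes each $d_k(m)$ as a finite sum of products of coefficients $b_j$ whose indices grow with $m$; inserting $|b_j|=O(j^{-(p+1+\alpha)})$ and summing against the weights $1/(k-1)$ should produce the rate $m^{-(2(p+\alpha)+1)}$, and therefore $|\xi_m|\le c_1(\Gamma)\,m^{-(p+\alpha+1/2)}$. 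The main obstacle is exactly this bookkeeping: unlike the piecewise analytic case, where one simply quotes \cite[Thm 2.4]{St-arXiv1202} for the $O(1/m^2)$ bound, here one must track the joint dependence of $d_k(m)$ on $m$ and $k$ and verify that the coefficient decay of $\Psi$ propagates with the correct exponent through the summation. Should a manageable expression for $d_k(m)$ prove elusive, the fallback — and probably the cleanest route — is to invoke directly the $C(p+1,\alpha)$ analogue of the $H_m$-estimate of \cite[Thm 2.4]{St-arXiv1202}, after which the conclusion is immediate from the displayed reduction.
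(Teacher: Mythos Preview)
Your reduction to the bound $|\langle p_l,G_m\rangle|\le c(\Gamma)\,\|H_m\|_{L^2(\Omega)}$ is exactly the paper's, and you correctly identify that the only new ingredient is the sharper decay $\int_\Omega|H_m|^2\,dA=O(m^{-(2(p+\alpha)+1)})$.

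Where you diverge is in how this $H_m$-estimate is obtained. You propose transplanting to $|w|>1$, expanding $H_m(\Psi(w))\Psi'(w)$ in negative powers, and bounding the coefficients $d_k(m)$ through the recurrence \eqref{eq:Gn-rr} and the decay $|b_j|=O(j^{-(p+1+\alpha)})$; you are candid that the combinatorics here are the main obstacle. The paper bypasses this entirely: applying Green's formula in the \emph{exterior} domain $\Omega$ to $H_m$ and $\overline{E_{m+1}}$ (using $H_m=E_{m+1}'/(m+1)$) yields the exact identity
\[
\frac{1}{2\pi i}\int_\Gamma H_m(z)\,\overline{E_{m+1}(z)}\,dz=-\frac{m+1}{\pi}\int_\Omega|H_m(z)|^2\,dA(z),
\]
and then Suetin's ready-made estimate \cite[Lem.~1.5]{Su74} bounds the left side by $c(\Gamma)\,m^{-2(p+\alpha)}$. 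Division by $m+1$ gives the required $O(m^{-(2(p+\alpha)+1)})$ in two lines, with no coefficient bookkeeping at all.

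Your fallback---citing a smooth-boundary analogue of \cite[Thm~2.4]{St-arXiv1202}---is in spirit correct, but that analogue is not sitting in \cite{St-arXiv1202} to be quoted; the paper \emph{derives} it on the spot via the Green's-formula identity above together with Suetin's lemma. So your plan is sound and would ultimately succeed, but the direct Laurent-coefficient route you outline is a detour: the paper's Green-plus-Suetin argument is both shorter and avoids the delicate double summation you flag as problematic.
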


\begin{proof}
The result for $m=l$ is established in Lemma~\ref{lem:pkGm}. Hence, we only consider the case $m=l+1,l+2,\ldots$.

The following estimate has been obtained by Suetin for $\Gamma\in C(p+\alpha)$;
see  \cite[Lem.~1.5]{Su74}:
\begin{equation}\label{eq:Suetin-est}
\left|\frac{1}{2\pi i}\int_\Gamma H_m(z) \overline{E_{m+1}(z)}dz\right|\le
c_2(\Gamma) \frac{1}{m^{2(p+\alpha)}}.
\end{equation}
By using Green's formula in the unbounded domain $\Omega$, together with  (\ref{eq:GnFn+1}), it is readily seen that
\begin{equation}\label{eq:St-en}
\frac{1}{2\pi i}\int_\Gamma H_m(z) \overline{E_{m+1}(z)}dz=-\frac{m+1}{\pi}\int_\Omega|H_m(z)|^2dA(z).
\end{equation}
Hence, from (\ref{eq:Suetin-est}),
\begin{equation}\label{eq:Hn-smooth}
\int_\Omega|H_m(z)|^2dA(z)\le c_3(\Gamma) \frac{1}{m^{2(p+\alpha)+1}},
\end{equation}
and the result (\ref{eq:rho-esti-sm}) follows from the estimate (\ref{eq:plGm}), which is applicable in this case because
any smooth Jordan curve is also quasiconformal and rectifiable.
\end{proof}

\begin{lemma}\label{lem:zqmpl-sm}
Assume that $\Gamma\in C(p+1,\alpha)$. Then for every $m\in\mathbb{N}$ and $l=0,1,2,\ldots$, it holds that
\begin{equation}\label{lem:zqmpl-sm-1}
|\langle zq_m, p_l\rangle|\le c_1(\Gamma)\frac{1}{m^{p+\alpha+1/2}}.
\end{equation}
\end{lemma}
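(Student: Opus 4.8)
The plan is to mirror the proof of Lemma~\ref{lem:zqmpl} exactly, upgrading only the input bound on $\|q_m\|_{L^2(G)}$ to reflect the added smoothness $\Gamma\in C(p+1,\alpha)$. Recall the auxiliary polynomial $q_{m-1}(z)=G_m(z)-(\gamma^{m+1}/\lambda_m)p_m(z)$ from \eqref{eq:qndef}; the entire lemma reduces, via Cauchy--Schwarz, to controlling the $L^2(G)$-norm of $q_m$. As in the piecewise-analytic case I would write
\begin{equation*}
|\langle zq_m, p_l\rangle|\le \|zq_m\|_{L^2(G)}\|p_l\|_{L^2(G)}\le \max\{|z|:z\in\Gamma\}\,\|q_m\|_{L^2(G)},
\end{equation*}
so that once a bound $\|q_m\|_{L^2(G)}\le c_2(\Gamma)\,m^{-(p+\alpha+1/2)}$ is in hand the result \eqref{lem:zqmpl-sm-1} follows immediately.

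The key step is therefore to prove that $\|q_m\|_{L^2(G)}=O(m^{-(p+\alpha+1/2)})$ under the smoothness hypothesis, replacing the $O(1/m)$ bound of \cite[Cor.~2.1]{St-arXiv1202} used in Lemma~\ref{lem:zqmpl}. First I would express $\|q_m\|_{L^2(G)}^2$ by combining the definition \eqref{eq:qndef} with the strong asymptotic formula \eqref{eq:finepn-sm} for $p_m$ and the defining relation $G_m=\Phi^m\Phi'-H_m$ from \eqref{eq:PhinPhipGn}; the leading terms $\sqrt{(m+1)/\pi}\,\Phi^m\Phi'$ and $(\gamma^{m+1}/\lambda_m)p_m$ cancel to leading order by \eqref{eq:finelambdan-sm}, leaving $q_{m-1}$ controlled by the error term $H_m$ together with the $O(m^{-2(p+\alpha)})$ correction coming from \eqref{eq:finelambdan-sm} and the $O(\log m\, m^{-(p+\alpha)})$ factor in \eqref{eq:finepn-sm}. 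The crucial quantitative ingredient is the smooth Bergman-space estimate \eqref{eq:Hn-smooth}, already established in Lemma~\ref{lem:pkGm-sm},
\begin{equation*}
\int_\Omega |H_m(z)|^2\,dA(z)\le c_3(\Gamma)\,\frac{1}{m^{2(p+\alpha)+1}},
\end{equation*}
which yields $\|H_m\|_{L^2(\Omega)}=O(m^{-(p+\alpha+1/2)})$ and supplies precisely the exponent appearing in \eqref{lem:zqmpl-sm-1}.

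The main obstacle is the bookkeeping that converts the $L^2(\Omega)$-control of $H_m$ and the leading-order cancellation into an $L^2(G)$-bound for the polynomial $q_m$: the asymptotics \eqref{eq:finepn-sm} are stated on $\overline\Omega$, whereas $q_m$ must be measured on $G$. I expect the cleanest route is to invoke the analogue of \cite[Cor.~2.1]{St-arXiv1202} in its smooth-curve form, noting that Suetin's estimates \cite{Su74} give the required strong asymptotics uniformly enough that the same derivation carries through verbatim with the improved exponent $p+\alpha+1/2$ in place of $1$. Since rectifiability and quasiconformality of $\Gamma$ (used to pass between boundary integrals and area integrals, exactly as in Lemma~\ref{lem:pkGm-sm}) hold automatically for smooth Jordan curves, no further geometric hypotheses are needed, and the estimate \eqref{lem:zqmpl-sm-1} follows once \eqref{eq:Hn-smooth} is fed through the norm comparison above.
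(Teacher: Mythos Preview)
Your approach is essentially the paper's: Cauchy--Schwarz reduces the problem to bounding $\|q_m\|_{L^2(G)}$, and the paper obtains this by combining the general estimate $\|q_m\|_{L^2(G)}\le c(\Gamma)\bigl[\int_\Omega|H_m|^2\,dA\bigr]^{1/2}$ from \cite[Thm~2.1]{St-arXiv1202} (valid for any quasiconformal rectifiable boundary, hence automatically for smooth $\Gamma$) with the smooth-case bound \eqref{eq:Hn-smooth}. Your middle paragraph on direct cancellation via \eqref{eq:finepn-sm} is an unnecessary detour, and no ``smooth-curve form'' of Cor.~2.1 needs to be re-derived---the general Thm~2.1 bound already holds, and the improved exponent comes entirely from the sharper input \eqref{eq:Hn-smooth}.
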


\begin{proof}
As in the proof of Lemma~\ref{lem:zqmpl} we have
$$
|\langle zq_m, p_l\rangle|\le  \max\{|z|:z\in\Gamma\}\,\|q_m\|_{L^2(G)}.
$$
The result of the lemma then follows from (\ref{eq:Hn-smooth}) and the estimate
$$
\|q_m\|_{L^2(G)}\le c_2(\Gamma)\left[\int_\Omega|H_m(z)|^2dA(z)\right]^{1/2},
$$
established in \cite[Thm 2.1]{St-arXiv1202} for domains bounded by a quasiconformal and rectifiable boundary.
\end{proof}

In order to conclude the proof of the theorem, we need an estimate for the decay of the coefficients $b_n$, when the
boundary $\Gamma$ belongs
to the class $C(p+1,\alpha)$, with $p+\alpha>1/2$. This is done in  \cite[Cor. 1.1]{St-arXiv1202}, where it is
shown that
\begin{equation}\label{eq:bh-decay-smooth}
|b_n|\le c_3(\Gamma)\frac{1}{n^{p+\alpha+1/2}},\quad n\in\mathbb{N}.
\end{equation}
Therefore, by using the results for $\langle p_l, G_m\rangle$ and $\langle zq_m,p_l\rangle$, obtained
in the previous two lemmas, together with (\ref{eq:finelambdan-sm})
and (\ref{eq:expbnk}), we see that
\begin{equation*}
\begin{alignedat}{1}
\frac{\gamma^{n+1}}{\lambda_n}b_{n-k,n}
=O\left(\frac{1}{n^{p+\alpha+1/2}}\right)+
\sum_{j=1}^{k-1} O\left(\frac{1}{(j(n-j))^{p+\alpha+1/2}}\right)
+b_k\frac{\gamma^{n-k+1}}{\lambda_{n-k}},
\end{alignedat}
\end{equation*}
where $O$ does not depend on $n$ or $k$. Furthermore, from  (\ref{eq:finelambdan-sm}) we get
\begin{equation}\label{eq:goverl}
\frac{\gamma^{n+1}}{\lambda_n}=\sqrt{\frac{\pi}{n+1}}\left[1+O\left(\frac{1}{n^{2(p+\alpha)}}\right)\right]\\
\end{equation}
and
\begin{equation}\label{eq:goverl2}
\frac{\lambda_{n-k}}{\gamma^{n-k+1}}=\sqrt{\frac{n-k+1}{\pi}}\left[1+O\left(\frac{1}{(n-k+1)^{2(p+\alpha)}}\right)\right].
\end{equation}
The above yield, for $n\ge k\ge 0$, $n\ge 1$, that
\begin{align}\label{eq:0const-sm}
\sqrt{\frac{n-k+1}{n+1}}&b_{n-k,n}=b_k\left[1+O\left(\frac{1}{(n-k+1)^{2(p+\alpha)}}\right)\right]+O(\sqrt{n-k+1})\nonumber\\
&\times\left[O\left(\frac{1}{n^{p+\alpha+1/2}}\right)+\sum_{j=1}^{k-1}
O\left(\frac{1}{(j(n-j))^{p+\alpha+1/2}}\right)\right],
\end{align}
where a empty sum equals zero.
This leads, for fixed $k$ and $n\to\infty$, to the required estimate (\ref{th:main-sm-2}),
where now $O$ depends on $k$ but not on $n$.
\qed

\subsection{Proof of Theorem~\ref{th:main-analytic}} \label{sec:proof-analytic}
Assume that $\Gamma:=\partial G$ is an analytic Jordan curve.
Then the conformal map $\Phi$ has an analytic and univalent continuation across $\Gamma$ in $G$. Let $\varrho<1$ be
defined by
$$
\varrho:=\inf\{r:\ \Phi \mbox{ is analytic and univalent in }\text{ext}(L_\varrho)\setminus\infty\}.
$$
Then the following asymptotic formulas of Carleman~\cite{Ca23} hold as $n\to\infty$:
\begin{equation}\label{eq:finelambdan-an}
{\sqrt{\frac{n+1}{\pi}}\frac{\gamma^{n+1}}{\lambda_n}=1+O(\varrho^{2n})}
\end{equation}
and
\begin{equation}\label{eq:finepn-an}
p_n(z)=\sqrt{\frac{n+1}{\pi}}\,\Phi^n(z)\Phi^\prime(z)
\left\{1+O(\sqrt{n}\varrho^{n})\right\},\quad z\in\overline{\Omega},
\end{equation}
see \cite[p.\ 12]{Gabook87}. In particular,
\begin{equation}\label{eq:finepn-omega}
p_n(z)=\frac{\lambda_n}{\gamma^{n+1}}\,\Phi^n(z)\Phi^\prime(z)
\left\{1+\omega_n(z)\right\},
\end{equation}
where
\begin{equation}\label{eq:omega_n}
\omega_n(z)=\sum_{\nu=1}^{n}\nu A_\nu w^{\nu-1-n}-
\sum_{\nu =1}^\infty\nu a_\nu w^{-\nu-1-n},\quad w=\Phi(z),
\end{equation}
with
\begin{equation}\label{eq:Gaier2.8}
\sum_{\nu =1}^{n}\nu|A_\nu|^2+\sum_{\nu=1}^\infty
\nu|a_\nu|^2\varrho^{-2\nu}\le\frac{\varrho^{2n+2}}{(n+1)(1-\varrho^{2n})},\quad n\in\mathbb{N};
\end{equation}
see \cite[p.\ 15]{Gabook87}.

We fix two different points $z$ and $z_0$ on $\Gamma$ and define
$$
Q_{j+1}(z):=\int_{z_0}^{z}p_j(\zeta)d\zeta
+\frac{\lambda_j}{(j+1)\gamma^{j+1}}\Phi^{j+1}(z_0).
$$
Then, by using integration by parts and the change of variable $w=\Phi(\zeta)$, we have from (\ref{eq:finepn-omega}) and
(\ref{eq:omega_n}) that, for any $j\in\mathbb{N}$,
\begin{equation}\label{eq:Qj+1}
\begin{alignedat}{1}
Q_{j+1}(z)&=\frac{{\lambda_j}}{(j+1)\gamma^{j+1}}\Phi^{j+1}(z)
 +\frac{{\lambda_j}}{\gamma^{j+1}}\int_{z_0}^{z}\Phi^{j}(\zeta)\Phi^\prime(\zeta)\omega_j(\zeta)d\zeta\\
          &=\frac{{\lambda_j}}{(j+1)\gamma^{j+1}}\Phi^{j+1}(z)
 +\frac{{\lambda_j}}{\gamma^{j+1}}\int_{w_0}^{\Phi(z)}w^{j}\omega_j(\Psi(w))dw\\
          &=\frac{{\lambda_j}}{(j+1)\gamma^{j+1}}\Phi^{j+1}(z)
+\frac{{\lambda_j}}{\gamma^{j+1}}
\left[\sum_{\nu=1}^j A_\nu w^\nu+\sum_{\nu=1}^\infty a_\nu w^{-\nu} \right]_{w_0}^{\Phi(z)},
\end{alignedat}
\end{equation}
where $w_0=\Phi(z_0)$.
We claim that for $|w|=1$ there holds
\begin{equation}\label{eq:claim-sumAn}
\left|\sum_{\nu=1}^j A_\nu w^\nu+\sum_{\nu=1}^\infty a_\nu w^{-\nu}\right|
=O\left(\sqrt{\frac{\log (j+1)}{j+1}}\varrho^j\right).
\end{equation}
Indeed,
\begin{align}
\left|\sum_{\nu=1}^j A_\nu w^\nu+\sum_{\nu=1}^\infty a_\nu w^{-\nu}\right|
  &\le\sum_{\nu=1}^j |A_\nu|+\sum_{\nu=1}^\infty |a_\nu|\nonumber\\
 &\le\sqrt{\sum_{\nu=1}^j\nu |A_\nu|^2}\sqrt{\sum_{\nu=1}^j\frac{1}{\nu}}
  +\sqrt{\sum_{\nu=1}^\infty\nu |a_\nu|^2\varrho^{-2\nu}}
  \sqrt{\sum_{\nu=1}^\infty\frac{\varrho^{2\nu}}{\nu}}\nonumber\\
 &\le c_1(\Gamma)\sqrt{\log (j+1)}\sqrt{\sum_{\nu=1}^j\nu |A_\nu|^2}
 +c_2(\Gamma)\sqrt{\sum_{\nu=1}^\infty\nu |a_\nu|^2\varrho^{-2\nu}}\nonumber\\
 &\le c_3(\Gamma)\sqrt{\frac{\log (j+1)}{j+1}}\varrho^j ,
\end{align}
by (\ref{eq:Gaier2.8}), which establishes the claim.

Hence, using the estimate (\ref{eq:finelambdan-an}) we get
\begin{equation}\label{eq:Qj+1-fin}
Q_{j+1}(z)=\frac{\Phi^{j+1}(z)}{\sqrt{\pi (j+1)}}
\left\{1+O\left(\sqrt{(j+1)\log (j+1)}\right)\varrho^j\right\},
\quad z\in\Gamma.
\end{equation}

Next, by Green's formula we have for fixed $k=0,1,\ldots$ and $n\ge k+1$:
\begin{align}\label{eq:kn-analytic}
2\pi i\sqrt{\frac{n-k+1}{n+1}}b_{n-k,n}&=2\pi i\sqrt{\frac{n-k+1}{n+1}}\langle zp_{n},p_{n-k}\rangle\nonumber\\
&=\frac{2\pi i}{2i}\sqrt{\frac{n-k+1}{n+1}}\int_\Gamma zp_n(z)\overline{Q_{n-k+1}(z)}dz\nonumber\\
&=\int_\Gamma z\Phi^n(z)\Phi^\prime(z)\overline{\Phi^{n-k+1}(z)}dz+h_n\nonumber\\
&=\int_\Gamma\frac{\Phi^n(z)}{\Phi^{n-k+1}(z)}\Phi^\prime(z)zdz+h_n\nonumber\\
&=\int_{|w|=1}\frac{w^n}{w^{n-k+1}}\Psi(w)dw+h_n\nonumber\\
&=2\pi ib_k+h_n,
\end{align}
where
\begin{equation}\label{eq:hn}
h_n=O(\sqrt{n})\varrho^{n}+O\left(\sqrt{(n-k+1)\log(n-k+1)}\right)\varrho^{n-k}\{1+O(\sqrt{n})\varrho^{n}\}.
\end{equation}

Thus, for $k\ge 0$ fixed and $\varrho<1$,
\begin{equation}\label{eq:n-analytic}
\sqrt{\frac{n-k+1}{n+1}}b_{n-k,n}=b_k+O(\sqrt{n\log n}\varrho^n),\quad \text{as }n\to\infty.
\end{equation}

It remains to prove (\ref{eq:main-analytic1}). This follows at once from the strong asymptotics for the leading
coefficient (\ref{eq:finelambdan-an}) and the relation (\ref{eq:bn+1n}).
\qed

\subsection{Proof of Theorem~\ref{th:inverse-analytic}} 
We first note that our assumption (\ref{eq-inverse-analytic}), combined with (\ref{eq:bn+1n}), implies that
\begin{equation}\label{eq-inverse-analytic-imp}
\limsup_{n\to\infty}\left|\sqrt{\frac{n+2}{n+1}}\frac{\lambda_{n}}{\lambda_{n+1}}-b\right|^{1/n}<1.
\end{equation}
Now set
$$
\xi_n:=\sqrt{\frac{n+2}{n+1}}\frac{\lambda_{n}}{\lambda_{n+1}}\frac{1}{b}-1,
$$
so that
\begin{equation}\label{eq:xi-dec}
\limsup_{n\to\infty}|\xi_n|^{1/n}<1.
\end{equation}

At the other hand, we have from (\ref{eq:finelambdan}) and (\ref{eq:b-def}) that
\begin{equation*}
(1+\xi_n)^2=\frac{1-\alpha_{n+1}}{1-\alpha_n}.
\end{equation*}
Hence,
\begin{equation*}\label{eq:xi-eq}
\xi_n=\frac{\alpha_n-\alpha_{n+1}}{(1-\alpha_n)(2+\xi_n)},
\end{equation*}
and by using the fact that $\xi_n\to 0$, as $n\to\infty$ together with $0 \le \alpha_n<1$ and
$\alpha_n\to 0$, as $n\to\infty$, we obtain the double inequality
\begin{equation}\label{eq:xi-alpha}
c_1| \alpha_n-\alpha_{n+1}|\le |\xi_n| \le c_2| \alpha_n-\alpha_{n+1}|,
\end{equation}
for some positive constants $c_1$ and $c_2$.

Now, by expanding $\alpha_n$ in the telescoping series
$$
\alpha_n=(\alpha_n-\alpha_{n+1})+(\alpha_{n+1}-\alpha_{n+2})+\cdots,
$$
we conclude, in view of (\ref{eq:xi-dec})--(\ref{eq:xi-alpha}), that
\begin{eqnarray}\label{eq:alpha-week}
\limsup_{n\to\infty} \alpha_n^{1/n} < 1,
\end{eqnarray}
and this, in view of  Theorem~1.3 in \cite{St-arXiv1202} leads to
$$
\limsup_{n\to\infty} |b_n|^{1/n} < 1.
$$

The last inequality implies that the conformal map $\Psi(w)$ has an analytic continuation across $\mathbb{T}$ into $\mathbb{D}$ (see (\ref{eq:Psi})) and thus $\Gamma$ is the analytic image of $\mathbb{T}$.
Therefore, around any $w_0\in\mathbb{T}$, the map $\Psi$ can be represented by a Taylor series expansion of the form
$$
\Psi(w)=\Psi(w_0)+a_1(w-w_0)+a_2(w-w_0)^2+a_3(z-z_0)^3\cdots.
$$
If we had $\Psi^\prime(w_0)=0$, then
$$
\Psi(w)=\Psi(w_0)+a_2(w-w_0)^2+\cdots,
$$
with $a_2\ne 0$, because $\Psi$ is univalent in $\Delta$. These show that $w_0$ would be mapped by $\Psi$ onto an
exterior pointing cusp on $\Gamma$. Since, by assumption, this cannot happen, we see that $\Psi^\prime(w)\neq 0$,
$w\in\mathbb{T}$, which yields the required property that $\Gamma$ is an analytic Jordan curve.
\qed

\subsection{Proof of Theorem~\ref{thm:recalg}}
Recall that $n:=2m$. On $|w|=R$, where $\varrho<1\le R<\infty$, we have from (\ref{eq:Psi}) and (\ref{eq:psi-approx})
\begin{align*}\label{eq:err-Psinm}
|\Psi(w)-\Psi^{(n)}_m(w)|\le |b^{(n)}-b|R+\sum_{k=0}^m\frac{|b^{(n)}_k-b_k|}{R^k}+\sum_{k=m+1}^\infty\frac{|b_k|}{R^k}.
\end{align*}
Therefore, by using the result of Theorem~\ref{th:main-analytic} (see also (\ref{eq:kn-analytic})) and the estimate
$$
|b_k|\le c_1(\Gamma)\frac{\varrho^k}{\sqrt{k}},\quad k\in\mathbb{N};
$$
see \cite[Cor.~1.1]{St-arXiv1202} we get
\begin{equation}\label{eq:final}
|\Psi(w)-\Psi^{(n)}_m(w)|\le c_2(\Gamma)\varrho^{4m}R+c_3(\Gamma)\sqrt{m\log m}\varrho^m+
c_4(\Gamma)\left(\frac{\varrho}{R}\right)^m,
\end{equation}
which yields the desired estimate.
\qed

\bibliographystyle{amsplain}

\begin{thebibliography}{10}

\bibitem{AxCoMcDo}
S.~Axler, J.~B. Conway, and G.~McDonald, \emph{Toeplitz operators on {B}ergman
  spaces}, Canad. J. Math. \textbf{34} (1982), no.~2, 466--483.

\bibitem{Bo-Gr05}
A.~B{\"o}ttcher and S.~M. Grudsky, \emph{Spectral properties of banded
  {T}oeplitz matrices}, Society for Industrial and Applied Mathematics (SIAM),
  Philadelphia, PA, 2005.

\bibitem{Ca23}
T.~Carleman, \emph{\"{U}ber die {A}pproximation analytisher {F}unktionen durch
  lineare {A}ggregate von vorgegebenen {P}otenzen}, Ark. Mat., Astr. Fys.
  \textbf{17} (1923), no.~9, 215--244.

\bibitem{DaPo}
P.~Davis and H.~Pollak, \emph{On the analytic continuation of mapping
  functions}, Trans. Amer. Math. Soc. \textbf{87} (1958), 198--225.

\bibitem{EVetna}
M.~Eiermann and R.~S. Varga, \emph{Zeros and local extreme points of {F}aber
  polynomials associated with hypocycloidal domains}, Electron. Trans. Numer.
  Anal. \textbf{1} (1993), no.~Sept., 49--71 (electronic only).

\bibitem{Gabook87}
D.~Gaier, \emph{Lectures on complex approximation}, Birkh\"auser Boston Inc.,
  Boston, MA, 1987.

\bibitem{GHMP}
B.~Gustafsson, C.~He, P.~Milanfar, and M.~Putinar, \emph{Reconstructing planar
  domains from their moments}, Inverse Problems \textbf{16} (2000), no.~4,
  1053--1070.

\bibitem{GPSS}
B.~Gustafsson, M.~Putinar, E.~Saff, and N.~Stylianopoulos, \emph{Bergman
  polynomials on an archipelago: Estimates, zeros and shape reconstruction},
  Advances in Math. \textbf{222} (2009), 1405--1460.

\bibitem{KhSt}
D.~Khavinson and N.~Stylianopoulos, \emph{Recurrence relations for orthogonal
  polynomials and algebraicity of solutions of the {D}irichlet problem}, Around
  the research of {V}ladimir {M}az'ya. {II}, Int. Math. Ser. (N. Y.), vol.~12,
  Springer, New York, 2010, pp.~219--228.

\bibitem{PW86}
N.~Papamichael and M.~K. Warby, \emph{Stability and convergence properties of
  {B}ergman kernel methods for numerical conformal mapping}, Numer. Math.
  \textbf{48} (1986), no.~6, 639--669.

\bibitem{PuSt}
M.~Putinar and N.~Stylianopoulos, \emph{Finite-term relations for planar
  orthogonal polynomials}, Complex Anal. Oper. Theory \textbf{1} (2007), no.~3,
  447--456.

\bibitem{St-CR10}
N.~Stylianopoulos, \emph{Strong asymptotics for {B}ergman polynomials over
  domains with corners}, C. R. Math. Acad. Sci. Paris \textbf{348} (2010),
  no.~1-2, 21--24.

\bibitem{St-arXiv1202}
\bysame, \emph{Strong asymptotics for {B}ergman polynomials over non-smooth
  domains and applications}, arXiv:1202.0994v1 (2012).

\bibitem{Su74}
P.~K. Suetin, \emph{Polynomials orthogonal over a region and {B}ieberbach
  polynomials}, American Mathematical Society, Providence, R.I., 1974.

\bibitem{Ul72}
J.~L. Ullman, \emph{The location of the zeros of the derivatives of {F}aber
  polynomials}, Proc. Amer. Math. Soc. \textbf{34} (1972), 422--424.

\end{thebibliography}

\def\cprime{$'$}
\providecommand{\bysame}{\leavevmode\hbox to3em{\hrulefill}\thinspace}
\providecommand{\MR}{\relax\ifhmode\unskip\space\fi MR }
\providecommand{\MRhref}[2]{%
  \href{http://www.ams.org/mathscinet-getitem?mr=#1}{#2}
}
\providecommand{\href}[2]{#2}

\end{document}